\newcommand{\resize}[2]{%
\resizebox{#1\hsize}{!}{$ #2 $ }%
}
\newcommand\footnoteref[1]{\protected@xdef\@thefnmark{\ref{#1}}\@footnotemark}
 \newtheorem{theorem}{Theorem}
 \newtheorem{lemma}[theorem]{Lemma}
 \newdefinition{rmk}{Remark}
 \newproof{proof}{Proof}
 \newtheorem{proposition}{Proposition}
 \newtheorem{corollary}[theorem]{Corollary}
\journal{  }
\begin{document}

\begin{frontmatter}

\title{Active-set prediction in quadratic programming using interior point methods and controlled perturbations{\tnoteref{fnt:disclaimer}}}

\tnotetext[fnt:disclaimer]{
The results of this paper were obtained while the author was  a full-time Ph.D. student at the University of Edinburgh, working under the guidance of Dr. Coralia Cartis. 
This research was supported by the Principal's Career Development Scholarship from the University of Edinburgh.}

\author[Yan]{Yiming Yan}

\ead{yiming.yan@sheffield.ac.uk}

\address[Yan]{
	Department of Automatic Control and System Engineering,
	University of Sheffield, \\
	Sheffield, S1 3JD, 
        United Kingdom. 
}

\begin{abstract}
In this paper, we extend the idea of using controlled perturbations to enhance the capabilities of active-set prediction for interior point methods for convex Quadratic Programming ({\qp}) problems. 
Namely, we consider perturbing the inequality constraints (by a small amount) so as to enlarge the feasible set. We show that if the perturbations are chosen judiciously, then there exists a primal-dual pair of points which is close to the optimal solution of the perturbed problems and the corresponding active and inactive sets at this point are the same as the optimal active and inactive sets at an optimal solution of the original {\qp} problems.
Additionally, we prove that  the optimal tripartition of the original problems can also be predicted by solving the perturbed ones.
Furthermore, encouraging preliminary numerical experience is also presented for the {\qp} case.   
\end{abstract}

\begin{keyword}
active-set prediction \sep interior point method \sep  quadratic programming
\end{keyword}

\end{frontmatter}

\section{Introduction}
\label{sec:intro}

Consider an inequality-constrained optimisation problem, which minimises (or maximises) the objective function over the feasible region composed of points satisfying the constraints. An \emph{active constraint} is an inequality constraint that holds as equality at a feasible point~\cite{CartisYan2014}. 
Active-set prediction is a technique used to identify the active constraints at an optimal solution of the problem without knowing this solution. Normally it is performed during the solving process of an iterative optimisation algorithm before the final (optimal) iterate is reached, using only information provided by the current iterate or at most several consecutive iterates.

Despite being a class of powerful tools for solving Linear Programming ({\lp}) and Quadratic Programming ({\qp}) problems, 
Interior Point Methods ({\ipms}) are well-known to encounter difficulties with active-set prediction, even for {\lp} problems, due essentially to their constructions~\cite{CartisYan2014}. 
When applied to an inequality constrained optimization problem, {\ipms} generate iterates that belong to the interior of the set determined by the constraints, thus avoiding/ignoring the combinatorial aspect of the solution. This comes at the cost of difficulty in predicting the optimal active constraints that would enable termination, as well as increasing ill-conditioning of the solution process. 

Although active-set prediction techniques for {\ipms} have existed for over a decade,  they suffer from difficulties in making an accurate prediction at the early stage of the iterative process of {\ipms}.  In the case of indicators~\cite{bakry} for example, to get a good prediction, the iterates still need to be close to optimality (small duality gap).
For instance, in~\cite[Table 8.2]{bakry}, at the third from the last iteration, 3 out of the 6 problems predict only a very small portion of the active constraints (less than 15\%) using Tapia indicators. For a review of active-set prediction techniques for {\ipms}, please refer to~\cite{CartisYan2014}.

To address the above mentioned challenge, Cartis and Yan~\cite{CartisYan2014} introduce the idea of using controlled perturbations for {\ipms}  in the purpose of predicting the optimal active set of {\lp} problems.
Namely, in the context of {\lp} problems, they consider perturbing the inequality constraints so as to enlarge the feasible set.
They solve the resulting perturbed problem(s) using a path-following {\ipm} while predicting on the way the active set of the original {\lp} problem; this approach is able to accurately predict the optimal active set of the original problem before the duality gap for the perturbed problem gets too small.
Furthermore, depending on problem conditioning, this prediction can happen sooner than predicting the active set for the perturbed problem or for the original one if no perturbations are used. 

The aim of this paper is to extend this idea to convex {\qp} problems. 
{\qp} problems share many properties of {\lp}, based on which the extension of some results is straightforward (Theorems~\ref{thm:ExistenceOfPerfectPerturbationsForQP} and~\ref{thm:ExistenceOfRelaxedPerturbationsForQP}). 
However, {\qp} problems are not guaranteed to have a strictly complementary solution~\cite{Berkelaar1996,Berkelaar1997}\footnote{
\label{fnt:strictlyCs}
{\ipms} for {\lp} converge to a so-called strictly complementary solution (which always exists for {\lp}~\cite{goldman1956polyhedral}) which leads to a unique optimal active and inactive partition of the constraints~\cite[Section 4.2]{CartisYan2014}. 
Such a solution may not exist for {\qp}.
For the definition the strictly complementary solution, please refer toTheorem 2.3 in~\cite{wright} and the discussion after that.
}
and the existence of a strictly complementary solution is crucial to the theory for the {\lp} case. 
In the proof of \cite[Theorem 3.3]{CartisYan2014}, the construction of an optimal solution of the perturbed {\lp} problems relies on the existence of a strictly complementary solution, more exactly the strictly complementary partition{\footnoteref{fnt:strictlyCs} for the solution of the {\lp} problems; without this, \cite[Lemma 4.2]{CartisYan2014} will not hold and therefore the consequent Lemma 4.3 and the main prediction results, Theorems 4.4 -- 4.6, will not hold.

The main contributions in this paper lie on two directions.
\begin{itemize}
\item We extend the results to {\qp} without strictly complementary assumption, with all major prediction results having been reproduced for {\qp}. In particular, we present the result of preserving the active set from the aspect of a least-squares solution, which yields more general result. 
\item  The lack of strictly complementary solution leads to the analysis of the so-called `tripartition' (Section~\ref{sec:preservingOptActvQP}) instead of the optimal active and inactive partition~\cite{CartisYan2014}. We have proved that we can also predict the optimal tripartition of the original {\qp} problems by solving the perturbed ones.
\end{itemize}

\paragraph*{Structure of this paper}
In the following sections, we present the formulations of the perturbed {\qp} problems (Section~\ref{sec:cp4qp}) and their properties (Section~\ref{sec:properties_per_qp}).
We then derive theorems on predicting the optimal active set of a {\qp} problem without the strictly complementary assumption\footnoteref{fnt:strictlyCs} (Section~\ref{subsec-predict_opt_actv}); we also present results on predicting the optimal tripartition of a {\qp} problem (Section~\ref{subsec-predict_tripartition}). 
In Section~\ref{sec:numericsQP}, we first present the perturbed algorithm structure in Section~\ref{sec:perturbedAlg} and introduce the test problems in Section~\ref{sec:testproblems_qp}. 
In Section~\ref{sec:preidctionRatios_qp}, similarly to the linear case, we conduct numerical tests on the accuracy of the predicted optimal active set of the convex ({\qp}) prelims.
Then in Section~\ref{sec:solveSub_qp}, we predict the optimal active set, build a sub-problem by removing the active constraints and corresponding rows/columns in the problem data, $A$, $c$, and $H$,  solve the sub-problem using the active-set method and compare the number of active-set iterations. 
The feasibility error and relative difference between the optimal objective value of the sub-problem and that of the original problem are also measured; see~\eqref{eqv:feaErrorTest} and~\eqref{eqv:objErrorTest} for details.

\section{Controlled perturbations for quadratic programming problems}
\label{sec:cp4qp}
Consider the following pair of primal and dual convex {\qp} problems,
\begin{equation}
\label{eqv:orgQP}
\tag{QP}
\begin{array}{lll}
	\mbox{(Primal)} & \hspace{1cm} & \mbox{(Dual)}\\
	\begin{array}{cl}
		\displaystyle \min_{x} & \frac{1}{2}x^{T}Hx+c^{T}x\\
                 \mbox{s.t.}& Ax = b,\\
			& x \geq 0,
	\end{array} &
	\hspace{1em} &
	\begin{array}{cl}
		\displaystyle \max_{(x,y,s)} & b^{T}y - \frac{1}{2}x^{T}Hx\\
		\mbox{s.t.} & A^{T}y + s - Hx= c,\\
	               & y \text{ free},\,\, s \geq 0, 
	\end{array}
\end{array}
\end{equation}
where $H \in \Real^{n \times n}$ is symmetric positive semi-definite, $A \in \Real^{m \times n}$ with $m \leq n$, $y$, $b \in \Real^{m}$ and $x$, $s$, $c \in \Real^{n}$. When $H \equiv 0$, these problems reduce to the {\lp} problems.

We enlarge the feasible set of the~\eqref{eqv:orgQP}  problems by enlarging the nonnegativity constraints in~\eqref{eqv:orgQP} and consider the following perturbed problems,
\begin{equation}
\label{eqv:perQP}
\tag{QP$_{\lambda}$}
\resize{0.86}{
\begin{array}{lll}
	\mbox{(Primal)} & \hspace{1cm} & \mbox{(Dual)}\\
	\begin{array}{cl}
                 \min_{x} & \frac{1}{2}(x+\lambda)^{T}H(x+\lambda) \\
                               & +(c+(I-H)\lambda)^{T}(x+\lambda)\\
                 \mbox{s.t.}& Ax = b,\\
                            & x \geq -\lambda,
                \end{array} &
                \hspace{0em} &
                \begin{array}{cl}
                \max_{(x,y,s)} & (b+A\lambda)^{T}y \\ 
                			     & - \frac{1}{2}(x+\lambda)^{T}H(x+\lambda)\\
		\mbox{s.t.} & A^{T}y + s - Hx= c,\\
	               & y \text{ free},\,\, s \geq -\lambda,
	\end{array}
\end{array} }
\end{equation}
where $\lambda \in \Real^{n}$ and $\lambda \geq 0$. 
Note that if $\lambda \equiv 0$,~\eqref{eqv:perQP} is equivalent to~\eqref{eqv:orgQP}.
By formulating the Lagrangian dual~\cite{Bertsekas} of the primal (dual) problem in~\eqref{eqv:perQP}, it is straightforward to show the following result.
\begin{proposition} 
The two problems in~\eqref{eqv:perQP} are dual to each other.
\end{proposition}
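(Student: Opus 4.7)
The plan is to form the Lagrangian dual of the perturbed primal in~\eqref{eqv:perQP} and verify it coincides with the stated dual; the reverse direction then follows by the symmetric argument (or by biduality for convex QP). The cleanest route is via a change of variables. In the primal, substituting $z := x + \lambda$ reduces~\eqref{eqv:perQP} to the standard convex QP
\[
\min_{z} \ \tfrac{1}{2}z^{T}Hz + (c+(I-H)\lambda)^{T}z \quad \text{s.t.} \quad Az = b+A\lambda, \ z \geq 0,
\]
whose Lagrangian (Wolfe) dual can be written down immediately as
\[
\max_{(z,y,s')} \ (b+A\lambda)^{T}y - \tfrac{1}{2}z^{T}Hz \quad \text{s.t.} \quad A^{T}y + s' - Hz = c + (I-H)\lambda, \ s' \geq 0, \ y \text{ free}.
\]
Undoing the substitution via $z = x + \lambda$ and $s' = s + \lambda$ then reproduces the stated dual: the $-H\lambda$ and $+\lambda$ generated on the left-hand side of the stationarity constraint precisely cancel $(I-H)\lambda$ on the right, leaving $A^{T}y + s - Hx = c$; the sign condition $s' \geq 0$ becomes $s \geq -\lambda$; and the objective becomes $(b+A\lambda)^{T}y - \tfrac{1}{2}(x+\lambda)^{T}H(x+\lambda)$.

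Alternatively, following the text's hint, one can compute the Lagrangian of the perturbed primal directly. Introducing free multipliers $y$ for $Ax = b$ and nonnegative multipliers $\tilde{s}$ for $x + \lambda \geq 0$, stationarity in $x$ reads $H(x+\lambda)+(c+(I-H)\lambda)-A^{T}y-\tilde{s}=0$, which collapses to $A^{T}y + \tilde{s} - Hx = c + \lambda$ once the $H\lambda$ contributions cancel. Renaming $s := \tilde{s} - \lambda$ absorbs the surplus $\lambda$ and delivers both the stated stationarity constraint $A^{T}y + s - Hx = c$ and the sign condition $s \geq -\lambda$; substituting this back into the Lagrangian and simplifying gives the advertised dual objective.

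The main obstacle is purely bookkeeping. The cross terms $x^{T}H\lambda$ and $\lambda^{T}Hx$ produced by expanding $(x+\lambda)^{T}H(x+\lambda)$ and by $(c+(I-H)\lambda)^{T}(x+\lambda)$ must cancel, as must the $-H\lambda$ and $+H\lambda$ contributions arising in the stationarity step. These cancellations are no accident: the particular form of the perturbed linear term $c+(I-H)\lambda$ in the primal of~\eqref{eqv:perQP} is engineered precisely so that the shift $z = x + \lambda$ reduces the problem to a standard QP, which is what permits the dual to take the clean form displayed in~\eqref{eqv:perQP}. No genuine conceptual difficulty arises.
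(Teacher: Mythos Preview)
Your proposal is correct and follows essentially the same approach as the paper, which merely states that the result is ``straightforward'' upon forming the Lagrangian dual of the primal (or dual) in~\eqref{eqv:perQP}. Your first route, via the substitution $z=x+\lambda$, is in fact precisely the change of variables the paper itself uses immediately afterwards to obtain the equivalent form~\eqref{eqv:perQP_pq}, so you have simply fleshed out the details the paper omits.
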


We denote the set of {\em strictly feasible points} of~\eqref{eqv:perQP}
\begin{equation}
\label{eqv:perturbed_strcitly_fea_set}
	{\sfspqp} = \{(x,y,s) \,|\, Ax=b, \, A^{T}y+s-Hx = c, \, x+\lambda>0, \, s+\lambda>0\}.
\end{equation}
${\sfspqp}$ coincides with the strictly feasible set  of~\eqref{eqv:orgQP} if $\lambda \equiv 0$.

According to~\cite[Theorem 12.1]{Nocedal2006}, we derive the {\kkt} conditions for~\eqref{eqv:perQP},  
\begin{subequations}
\label{eqv:kkt_perturbedQP}
	\begin{align}
		Ax & = b,\label{eqv:fea-primal}\\
		A^{T}y+s-Hx & = c, \label{eqv:fea-dual}\\
		(X+\Lambda)(S+\Lambda)e & = 0, \label{eqv:kkt_perturbedQP_cpc}\\
		(x+\lambda,s+\lambda) & \geq  0 \label{eqv:kkt_perturbedQP-nonneg},
	\end{align}
\end{subequations}
where $\Lambda$ is a diagonal matrix with the entries of $\lambda$ on the diagonal and $e$ is a vector of ones. 
Any primal-dual pair $\pntp$ is an optimal solution of~\eqref{eqv:perQP} if and only if it satisfies~\eqref{eqv:kkt_perturbedQP}.  If $\lambda \equiv 0$, \eqref{eqv:kkt_perturbedQP} represents the {\kkt} conditions for~\eqref{eqv:orgQP}. 

\paragraph*{Equivalent formulation of~\eqref{eqv:perQP}}
Let $ p=x+\lambda$ and $q=s+\lambda$. Then we can rewrite~\eqref{eqv:perQP} as follows,
\begin{equation}
\label{eqv:perQP_pq}
\begin{array}{lll}
	\mbox{(Primal)} & \hspace{1cm} & \mbox{(Dual)}\\
	\begin{array}{cl}
                 \min_{p} & \frac{1}{2}p^{T} H p +\hat{c}_{\lambda}^{T}p\\
                 \mbox{s.t.}& Ap = \hat{b}_{\lambda},\\
                            & p \geq 0,
                \end{array} &
                \hspace{0em} &
                \begin{array}{cl}
                \max_{(p,y,q)} & \hat{b}_{\lambda}^{T}y - \frac{1}{2}p^{T} H p\\
		\mbox{s.t.} & A^{T}y + q - H p= \hat{c}_{\lambda},\\
	               & y \text{ free},\,\, q \geq 0,
	\end{array}
\end{array} 
\end{equation}
where 
\begin{equation}
\label{eqv:b_hat_c_hat}
	\hat{b}_{\lambda} = b+A\lambda \quad\text{and}\quad \hat{c}_{\lambda} = c + (I-H)\lambda.
\end{equation} 
Formulating the {\kkt} conditions of~\eqref{eqv:perQP_pq} and comparing them with~\eqref{eqv:kkt_perturbedQP}, we have the following result.
\begin{proposition}
$\optSollambda$ is an optimal solution of~\eqref{eqv:perQP} with some $\lambda \geq 0$ if and only if $(p^{*}_{\lambda},y^{*}_{\lambda},q^{*}_{\lambda})$, with $ p^{*}_{\lambda} = x^{*}_{\lambda} + \lambda$ and $ q^{*}_{\lambda} = s^{*}_{\lambda} + \lambda$, is a solution of~\eqref{eqv:perQP_pq}. 
\end{proposition}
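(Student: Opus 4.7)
The plan is to prove the equivalence by matching the KKT systems of the two formulations term-by-term under the change of variables $p = x + \lambda$ and $q = s + \lambda$. Since both~\eqref{eqv:perQP} and~\eqref{eqv:perQP_pq} are convex {\qp} problems (recall $H \succeq 0$), the {\kkt} conditions are both necessary and sufficient for optimality, so establishing bijective correspondence between their {\kkt} systems under the stated substitution is enough.

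First, I would write down the {\kkt} conditions for~\eqref{eqv:perQP_pq}, which are in standard convex {\qp} form:
\begin{equation*}
Ap = \hat{b}_{\lambda}, \quad A^{T}y + q - Hp = \hat{c}_{\lambda}, \quad PQe = 0, \quad (p,q) \geq 0,
\end{equation*}
where $P$ and $Q$ are the diagonal matrices formed from $p$ and $q$ respectively. Then I would substitute $p = x + \lambda$ and $q = s + \lambda$ and use the definitions $\hat{b}_{\lambda} = b + A\lambda$ and $\hat{c}_{\lambda} = c + (I-H)\lambda$ from~\eqref{eqv:b_hat_c_hat}. The primal feasibility $A(x+\lambda) = b + A\lambda$ collapses to $Ax = b$, matching~\eqref{eqv:fea-primal}. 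The dual feasibility $A^{T}y + (s+\lambda) - H(x+\lambda) = c + (I-H)\lambda$ simplifies, after cancelling $\lambda$ and $H\lambda$ on both sides, to $A^{T}y + s - Hx = c$, matching~\eqref{eqv:fea-dual}. The complementarity $PQe = 0$ is exactly $(X+\Lambda)(S+\Lambda)e = 0$, which is~\eqref{eqv:kkt_perturbedQP_cpc}, and nonnegativity $(p,q) \geq 0$ becomes $(x+\lambda, s+\lambda) \geq 0$, which is~\eqref{eqv:kkt_perturbedQP-nonneg}. Each implication is reversible, so the two systems coincide.

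This is essentially a routine verification rather than a deep argument; there is no genuine obstacle once one writes the KKT systems side by side. The only thing to be careful about is tracking the quadratic term under the shift: the objectives in~\eqref{eqv:perQP} were deliberately written in terms of $(x+\lambda)$ so that taking the gradient in $x$ produces the same stationarity condition as taking the gradient of $\tfrac{1}{2}p^{T}Hp + \hat{c}_{\lambda}^{T}p$ in $p$, which is why the cross-terms cancel cleanly in the dual feasibility check. Combined with convexity (and the fact that the feasible set of~\eqref{eqv:perQP_pq} is nonempty whenever that of~\eqref{eqv:perQP} is), the equivalence of {\kkt} conditions yields the claimed equivalence of optimal solutions in both directions.
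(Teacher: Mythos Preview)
Your proposal is correct and follows exactly the approach the paper indicates: the paper simply states that the result follows by ``formulating the {\kkt} conditions of~\eqref{eqv:perQP_pq} and comparing them with~\eqref{eqv:kkt_perturbedQP}'', which is precisely the term-by-term verification you carry out. Your write-up is in fact more detailed than what the paper provides.
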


\paragraph*{The central path of~\eqref{eqv:perQP}}
Following~\cite[Chapter 11]{Boyd}, we derive the central path equations for~\eqref{eqv:perQP}, namely
\begin{subequations}
\label{eqv:perturbedCentralPathQP}
\begin{align}
		Ax & =b,\\
		A^{T}y+s -Hx& =c, \\
		(X+\Lambda)(S+\Lambda)e & = \mu \, e, \label{eqv:perturbedCentralPathQP_CP}\\
		(x+\lambda,s+\lambda) & > 0,
\end{align}
\end{subequations}
where $\mu > 0$ is the barrier parameter for~\eqref{eqv:perQP}.  Note that \eqref{eqv:perturbedCentralPathQP} represents the central path equations for~\eqref{eqv:orgQP} when $\lambda \equiv 0$.
The central path of~\eqref{eqv:perQP} is well defined under mild assumptions, including
\begin{equation}
\label{asm:full_row_rank}
\text{\bf Assumption: } \hspace*{20ex}	A  \text{ has full row rank } m. \hspace*{20ex} 
\end{equation}
Under this assumption, Monteiro and Adler~\cite{Monteiro1989QP} show that the central path of a {\qp} problem exists if its strictly feasible set is nonempty. 
From this statement and considering the equivalent form~\eqref{eqv:perQP_pq} of~\eqref{eqv:perQP}, it follows that the central path of~\eqref{eqv:perQP} exists if its strictly feasible set $\sfspqp$ in~\eqref{eqv:perturbed_strcitly_fea_set} is nonempty. Thus we can draw the same conclusion as in the {\lp} case~\cite[Lemma 2.1]{CartisYan2014}, that given $\lambda > 0$, the existence of the perturbed central path requires weaker assumptions compared to those for the central path of~\eqref{eqv:orgQP}, because $\sfspqp$ is nonempty if~\eqref{eqv:orgQP} has a nonempty primal-dual feasible set.

\section{Properties of the perturbed quadratic programming  problems}
\label{sec:properties_per_qp} 
\subsection{Perfect and relaxed perturbations}
For the {\lp} case, we know that the optimal solution of the original problems can lie on or near the central path of the perturbed problems \cite[Section 3.1]{CartisYan2014}. Following exactly the same approach, we can verify that these results also hold for {\qp}.
\begin{theorem}[Existence of `perfect' perturbations for {\qp}]
\label{thm:ExistenceOfPerfectPerturbationsForQP}
\hfill\\
Assume~\eqref{asm:full_row_rank} holds and $\optSol$ is a solution of \eqref{eqv:orgQP}. Let $\hat{\mu} > 0$. Then there exist perturbations 
\[
	\hat{\lambda} = \hat{\lambda}(x^{*},s^{*},\hat{\mu}) > 0,
\]
such that the perturbed central path~\eqref{eqv:perturbedCentralPathQP} with $\lambda = \hat{\lambda}$~passes through  ${\optSol}$ exactly when $\mu = \hat{\mu}$.
\end{theorem}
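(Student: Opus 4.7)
The plan is to exhibit $\hat\lambda$ explicitly by solving the perturbed centering equation component-wise, exploiting the fact that primal-dual feasibility of $(x^{*},y^{*},s^{*})$ for \eqref{eqv:orgQP} automatically takes care of the linear equations in \eqref{eqv:perturbedCentralPathQP}. Since $(x^{*},y^{*},s^{*})$ solves \eqref{eqv:orgQP}, it satisfies $Ax^{*}=b$ and $A^{T}y^{*}+s^{*}-Hx^{*}=c$, so these hold verbatim for any choice of $\lambda$. The entire problem therefore reduces to choosing $\hat\lambda>0$ so that the nonlinear equation \eqref{eqv:perturbedCentralPathQP_CP} reads $(X^{*}+\hat\Lambda)(S^{*}+\hat\Lambda)e=\hat\mu\, e$, i.e.\ componentwise
\[
\hat\lambda_{i}^{2}+(x^{*}_{i}+s^{*}_{i})\hat\lambda_{i}+x^{*}_{i}s^{*}_{i}-\hat\mu=0,\qquad i=1,\dots,n.
\]

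Next I would invoke complementary slackness for \eqref{eqv:orgQP}: since $x^{*}\ge 0$, $s^{*}\ge 0$ and $(x^{*})^{T}s^{*}=0$ (which follows from the KKT conditions \eqref{eqv:kkt_perturbedQP} with $\lambda\equiv 0$), we have $x^{*}_{i}s^{*}_{i}=0$ for every $i$. The quadratic above therefore simplifies to $\hat\lambda_{i}^{2}+(x^{*}_{i}+s^{*}_{i})\hat\lambda_{i}-\hat\mu=0$, and the quadratic formula yields the unique positive root
\[
\hat\lambda_{i}=\frac{-(x^{*}_{i}+s^{*}_{i})+\sqrt{(x^{*}_{i}+s^{*}_{i})^{2}+4\hat\mu}}{2}.
\]
Because $\hat\mu>0$ and $x^{*}_{i}+s^{*}_{i}\ge 0$, the discriminant strictly exceeds $(x^{*}_{i}+s^{*}_{i})^{2}$, so $\hat\lambda_{i}>0$ for each $i$; this is a legitimate choice of perturbation. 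I would then define $\hat\lambda=(\hat\lambda_{1},\dots,\hat\lambda_{n})^{T}$ and note that $\hat\lambda$ depends only on $x^{*}$, $s^{*}$ and $\hat\mu$, as claimed.

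Finally, to conclude that $(x^{*},y^{*},s^{*})$ actually lies on the perturbed central path with parameter $\hat\mu$, I would verify the remaining strict inequality $(x^{*}+\hat\lambda,s^{*}+\hat\lambda)>0$, which is immediate since $x^{*},s^{*}\ge 0$ and $\hat\lambda>0$. There is essentially no obstacle in the argument: the only place that needed thought is the elimination of the cross term $x^{*}_{i}s^{*}_{i}$ via complementarity, which is what makes the component-wise quadratics solvable with a uniformly positive root and, crucially, does \emph{not} require strict complementarity — only the standard QP optimality conditions. Thus the result carries over from the LP setting without any reliance on the existence of a strictly complementary optimal partition.
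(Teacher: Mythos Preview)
Your proposal is correct and mirrors the paper's approach: the paper does not spell out a proof here but states that the argument ``follows exactly the same approach'' as the LP case in \cite{CartisYan2014}, which is precisely the componentwise construction you give --- use primal-dual feasibility to dispose of the linear equations, invoke complementarity to kill the cross term $x^{*}_{i}s^{*}_{i}$, and take the unique positive root of the resulting quadratic in $\hat\lambda_{i}$. Your observation that only standard complementarity (not strict complementarity) is needed is exactly the point the paper is making in extending the result from LP to QP.
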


\begin{theorem}[Existence of relaxed perturbations for {\qp}]
\label{thm:ExistenceOfRelaxedPerturbationsForQP}
\hfill \\
Assume $\optSol$ is a solution of \eqref{eqv:orgQP}. Let $\hat{\mu} > 0$ and $\xi \in (0,1)$. 
Then there exist constants $\hat{\lambda}_{L} = \hat{\lambda}_{L}(x^{*},s^{*},\hat{\mu},\xi)>0$, and $\hat{\lambda}_{U} = \hat{\lambda}_{U}(x^{*},s^{*},\hat{\mu},\xi)>0,$
such that for $\hat{\lambda}_{L} \leq \lambda \leq \hat{\lambda}_{U}$, $\optSol$ is strictly feasible for~\eqref{eqv:orgQP} and satisfies
\[
\xi \hat{\mu} e \leq (X^{*}+\Lambda)(S^{*}+\Lambda)e\leq \frac{1}{\xi} \hat{\mu} e. 
\]
\end{theorem}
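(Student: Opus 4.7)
The plan is to decouple the matrix-vector condition into $n$ independent scalar conditions on the components of $\lambda$. First, since $(x^*, y^*, s^*)$ solves~\eqref{eqv:orgQP}, the {\kkt} conditions (equation~\eqref{eqv:kkt_perturbedQP} with $\lambda \equiv 0$) yield complementary slackness $X^{*} S^{*} e = 0$, i.e., $x_i^* s_i^* = 0$ for every $i$. Hence, for any $\lambda \geq 0$,
\[
(x_i^* + \lambda_i)(s_i^* + \lambda_i) \;=\; \lambda_i^2 + (x_i^* + s_i^*)\lambda_i \;=:\; f_i(\lambda_i),
\]
so proving the theorem reduces to exhibiting, for each index $i$, an interval $[\ell_i, u_i] \subset (0, \infty)$ of admissible values of $\lambda_i$ on which $\xi \hat{\mu} \leq f_i(\lambda_i) \leq \hat{\mu}/\xi$.

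Next, I would observe that each $f_i$ is continuous and strictly increasing on $[0,\infty)$, with $f_i(0) = 0$ and $f_i(t) \to \infty$ as $t \to \infty$. By the intermediate value theorem there exist positive roots $\ell_i$ of $f_i(t) = \xi \hat{\mu}$ and $u_i$ of $f_i(t) = \hat{\mu}/\xi$; uniqueness and the inequality $0 < \ell_i < u_i$ follow from strict monotonicity together with $\xi < 1/\xi$. Both $\ell_i$ and $u_i$ are simply positive roots of explicit quadratics and therefore depend only on $(x_i^*, s_i^*, \hat{\mu}, \xi)$; no closed form is needed. I would then define $\hat{\lambda}_L$ and $\hat{\lambda}_U$ componentwise by $(\hat{\lambda}_L)_i = \ell_i$ and $(\hat{\lambda}_U)_i = u_i$.

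Finally, for any $\lambda$ with $\hat{\lambda}_L \leq \lambda \leq \hat{\lambda}_U$ componentwise, the monotonicity of each $f_i$ gives $\xi \hat{\mu} \leq f_i(\lambda_i) \leq \hat{\mu}/\xi$, which reassembles to the required two-sided bound on $(X^* + \Lambda)(S^* + \Lambda) e$. Strict feasibility of $(x^*, y^*, s^*)$ for~\eqref{eqv:perQP} then follows by inspection of $\sfspqp$ in~\eqref{eqv:perturbed_strcitly_fea_set}: primal-dual feasibility is inherited from~\eqref{eqv:orgQP}, while $\lambda_i \geq \ell_i > 0$ together with $x^*, s^* \geq 0$ yields $x_i^* + \lambda_i > 0$ and $s_i^* + \lambda_i > 0$.

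No step poses real difficulty, which is itself the key point. The only nontrivial ingredient is complementary slackness at a {\qp} optimum, and this is an elementary consequence of the {\kkt} system that, crucially, does \emph{not} require a strictly complementary solution (cf. footnote~\ref{fnt:strictlyCs}). That is precisely what makes the relaxed-perturbation argument extend cleanly from the {\lp} setting to convex {\qp}, mirroring the approach of Theorem~\ref{thm:ExistenceOfPerfectPerturbationsForQP}.
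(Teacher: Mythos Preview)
Your proposal is correct and follows essentially the same route as the paper, which does not give an independent proof but simply states that the result carries over from the {\lp} case by ``following exactly the same approach'' as in \cite[Section~3.1]{CartisYan2014}; the componentwise argument you give---reducing to the scalar quadratics $f_i(t)=t^2+(x_i^*+s_i^*)t$ via complementary slackness and invoking monotonicity/IVT---is precisely that approach. One cosmetic remark: the theorem statement's phrase ``strictly feasible for~\eqref{eqv:orgQP}'' is a typo for~\eqref{eqv:perQP}, as you correctly inferred by checking membership in $\sfspqp$.
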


Intuitively, these existence theorems imply that when the perturbations are chosen properly, the perturbed central path may pass or get very close to the original optimal solution. Thus 
we have the hope that from the iterates which follow the perturbed central path, we may be able to get enough information about the original optimal solution, so as to predict the optimal active set of the original problem.

\subsection{Preserving the  optimal active sets and tripartitions}
\label{sec:preservingOptActvQP}
Let $\optSol$ be a solution of~\eqref{eqv:orgQP} and define
\begin{equation}
\label{eqv:optActv_qp_x_s}
\begin{array}{rcl}
	\Actvqp(x^{*}) 	& = & \left\{ i \in \{1,\ldots,n\} \,|\, x^{*}_{i} = 0\right\},  \\
	\Iactvqp(x^{*})   & =  & \{ i \in \{1,\ldots,n\} \,|\, x^{*}_{i} > 0\}, \\
	\Actvqps(s^{*}) 	& = & \left\{ i \in \{1,\ldots,n\} \,|\, s^{*}_{i} = 0\right\}, \\
	\Sactvqp(s^{*})  & = & \{ i \in \{1,\ldots,n\} \,|\,s^{*}_{i} > 0\},
\end{array}
\end{equation}
where $\Actvqp(x^{*})$ is the \emph{primal active set} of~\eqref{eqv:orgQP}, $\Iactvqp(x^{*})$ the \emph{primal inactive set}, $\Actvqps(s^{*})$ the \emph{dual active set} and $\Sactvqp(s^{*})$ the \emph{dual inactive set}. From the complementary condition~\eqref{eqv:kkt_perturbedQP_cpc}} with $\lambda = 0$, it is easy to verify that 
\begin{equation}
\label{eqv:set_relations_qp}
	\Sactvqp(s^{*}) \subseteq \Actvqp(x^{*}), \quad \Iactvqp(x^{*}) \subseteq \Actvqps(s^{*}) \quad \text{and} \quad \Iactvqp(x^{*}) \cap \Sactvqp(s^{*}) = \emptyset.
\end{equation}
Note that $\Actvqp(x^{*}) \cap \Actvqps(s^{*})$ may not be empty.

We also denote
\begin{equation}
 \label{eqv:tripttn_qp_T}
 \T(x^{*},s^{*}) = \{1,\ldots,n \} \setminus \left(\Sactvqp(s^{*}) \cup \Iactvqp(x^{*})\right),
\end{equation}
which represents the complement of the optimal primal and dual inactive sets. 
This and~\eqref{eqv:set_relations_qp} give us that 
\[
\Sactvqp(s^{*}) \cap \Iactvqp(x^{*})  =  \Sactvqp(s^{*}) \cap \T(x^{*},s^{*}) = \Iactvqp(x^{*}) \cap \T(x^{*},s^{*}) = \emptyset,
\] 
and the union of them is the full index set, namely, $\Sactvqp(s^{*})$, $\Iactvqp(x^{*})$ and $\T(x^{*},s^{*})$ form an \emph{optimal tripartition} of $\{1,\ldots,n\}$ for~\eqref{eqv:orgQP}. 
From the definition of $\T(x^{*},s^{*})$, we have $x^*_{i} = s^*_{i} = 0$  for any $i \in  \T(x^{*},s^{*})$ and thus it is also straightforward to verify 
\begin{equation*}
\label{eqv:actv_trippt}
\Actvqp(x^{*}) = \Sactvqp(s^{*}) \cup  \T(x^{*},s^{*})
\quad\text{and}\quad 
\Actvqps(s^{*}) = \Iactvqp(x^{*}) \cup \T(x^{*},s^{*}).
\end{equation*}

The primal-dual pair in \eqref{eqv:orgQP} always has a \emph{maximal complementary solution}, at which the number of  positive components of $x^{*} + s^{*}$ is maximised~\cite{guler1993convergence}. 
Even at a maximal complementary solution,  $\T(x^{*},s^{*}) $ may not be empty because of the absence of the Goldman--Tucker Theorem for~\eqref{eqv:orgQP}.
Note that $\left( \Sactvqp(s^{*}),\Iactvqp(x^{*}), \T(x^{*},s^{*}) \right)$ forms a tripartition at any solution $\optSol$ of~\eqref{eqv:orgQP} but it may be different at different solutions; the tripartitions are only guaranteed to be invariant at maximal complementary solutions~\cite[Theorem 1.18]{ye2011interior}.

\paragraph*{Preserving the  optimal active sets}
Similarly, given a primal-dual pair $(x,y,s)$ for~\eqref{eqv:perQP}, we define the following sets
\begin{equation}
\label{eqv:perturbed_actv_qp_x_s}
\begin{array}{rl}
	\Actvqp_{\lambda}(x) = \left\{ i \in \{1,\ldots,n\} \,|\, x_{i} = -\lambda \right\},   & 	 \Iactvqp_{\lambda}(x)  =  \{ i \in \{1,\ldots,n\} \,|\,  x_{i} > -\lambda \}, \\
	\Actvqps_{\lambda}(s) = \left\{ i \in \{1,\ldots,n\} \,|\, s_{i} = -\lambda \right\}, &	 \Sactvqp_{\lambda}(s) =  \{ i \in \{1,\ldots,n\} \,|\,  s_{i} > -\lambda \}.
\end{array} 
\end{equation}

In the following theorem, we show that there exists  a primal-dual pair of points which is close to the optimal solution of~\eqref{eqv:perQP} and the corresponding active and inactive sets at this point are the same as the optimal active and inactive sets at an optimal solution of~\eqref{eqv:orgQP}.

\begin{theorem}
\label{thm:preserv_actv_qp}
Assume $\optSol$ is an optimal solution of~\eqref{eqv:orgQP}. Then there exist a positive constant $\hat{\lambda} = \hat{\lambda}(H, A,b,c,x^{*},s^{*})$,  a positive constant $C_{1} = C_{1} (H, A, x^{*},s^{*})$ and a primal-dual pair $(x,y,s)$ which satisfies~(\ref{eqv:kkt_perturbedQP_cpc},~\ref{eqv:kkt_perturbedQP-nonneg}) with $0 < \| \lambda \| < \hat{\lambda}$, such that 
\begin{equation}
\label{eqv:set_lambda_set_optsol}
	\Actvqp_{\lambda}(x) = \Actvqp(x^{*}), \quad  \Iactvqp_{\lambda}(x)  = \Iactvqp(x^{*}),  \quad \Actvqps_{\lambda}(s) = \Actvqps(s^{*}),   \quad \Sactvqp_{\lambda}(s) =  \Sactvqp(s^{*}), 
\end{equation}
and 
\begin{equation}
\label{eqv:fea_errer}
\max \left(\|Ax - b\|, \|A^{T}y + s - Hx - c\|\right) < C_{1}\| \lambda \|,
\end{equation}
where $\| \cdot \|$ is the Euclidean norm.
\end{theorem}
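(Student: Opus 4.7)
The plan is to exhibit $(x,y,s)$ by an explicit piecewise construction. Set $y := y^{*}$ and, for any $\lambda \geq 0$ with $0 < \|\lambda\| < \hat{\lambda}$, define
\begin{equation*}
x_i :=
\begin{cases}
x^{*}_{i}, & i \in \Iactvqp(x^{*}), \\
-\lambda_i, & i \notin \Iactvqp(x^{*}),
\end{cases}
\qquad
s_i :=
\begin{cases}
s^{*}_{i}, & i \in \Sactvqp(s^{*}), \\
-\lambda_i, & i \notin \Sactvqp(s^{*}).
\end{cases}
\end{equation*}
The guiding idea is that the tripartition $\bigl(\Sactvqp(s^{*}), \Iactvqp(x^{*}), \T(x^{*},s^{*})\bigr)$ dictates which factor of $(x_i+\lambda_i)(s_i+\lambda_i)$ must vanish at each index $i$; the definition above is the canonical choice consistent with this pattern that stays as close as possible to $(x^{*},s^{*})$.

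I would then check the stated properties in order. Complementarity~\eqref{eqv:kkt_perturbedQP_cpc} and nonnegativity~\eqref{eqv:kkt_perturbedQP-nonneg} follow from a three-way case split on the tripartition: if $i \in \Iactvqp(x^{*})$ then $s_i + \lambda_i = 0$ while $x_i + \lambda_i = x^{*}_i + \lambda_i > 0$; if $i \in \Sactvqp(s^{*})$ the symmetric statement holds; and if $i \in \T(x^{*},s^{*})$ both factors vanish. The set equalities~\eqref{eqv:set_lambda_set_optsol} are then immediate from the construction, since $x_i + \lambda_i > 0$ precisely on $\Iactvqp(x^{*})$ and $s_i + \lambda_i > 0$ precisely on $\Sactvqp(s^{*})$.

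For the feasibility error~\eqref{eqv:fea_errer}, observe that $x^{*}_i = 0$ for every $i \notin \Iactvqp(x^{*})$, so $x - x^{*}$ is zero on $\Iactvqp(x^{*})$ and equals $-\lambda_i$ elsewhere; hence $\|x - x^{*}\| \leq \|\lambda\|$, and symmetrically $\|s - s^{*}\| \leq \|\lambda\|$. Using $Ax^{*} = b$ and $A^{T}y^{*} + s^{*} - Hx^{*} = c$, this yields
\begin{equation*}
\|Ax - b\| = \|A(x - x^{*})\| \leq \|A\|\,\|\lambda\|
\quad \text{and} \quad
\|A^{T}y + s - Hx - c\| = \|(s - s^{*}) - H(x - x^{*})\| \leq (1 + \|H\|)\,\|\lambda\|,
\end{equation*}
so $C_{1} := \max(\|A\|,\, 1 + \|H\|)$ depends only on $H$ and $A$, and any positive $\hat{\lambda}$ is admissible in the statement.

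The argument is essentially structural rather than analytical: the key insight is that the tripartition, rather than a strictly complementary partition, is the correct combinatorial object to drive the construction. I do not anticipate a serious obstacle, and in particular the absence of strict complementarity for \qp (emphasised in the introduction) is handled transparently because the indices in $\T(x^{*},s^{*})$ enter the construction by setting both $x_i$ and $s_i$ equal to $-\lambda_i$.
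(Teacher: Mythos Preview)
Your construction is correct and proves the theorem as stated: the complementarity, nonnegativity, set equalities, and feasibility bound all follow exactly as you outline, and indeed any $\hat{\lambda}>0$ suffices while $C_{1}$ depends only on $\|A\|$ and $\|H\|$.

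However, your route is genuinely different from the paper's, and it is worth noting what each buys. The paper works in the shifted variables $(p,q)=(x+\lambda,s+\lambda)$ and builds the point around a least-squares correction: it sets $\hat p_{\Actvqp}=0$, $\hat q_{\Actvqps}=0$, and adjusts the remaining blocks by $(\hat u,\hat v)=M^{+}W(\lambda_{\Actvqp},\lambda_{\Actvqps})$, the minimum-norm least-squares solution of a linear system designed to drive the primal and dual residuals toward zero. This forces the paper to introduce a nontrivial $\hat\lambda$ (so that the corrections do not destroy positivity of $\hat p_{\Iactvqp}$ and $\hat q_{\Sactvqp}$) and to bound the residual by the least-squares optimal value. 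The payoff, emphasised in the paper's remarks, is that whenever the system $M(u,v)=W(\lambda_{\Actvqp},\lambda_{\Actvqps})$ is consistent, the constructed point is \emph{exactly} feasible and hence an optimal solution of~\eqref{eqv:perQP} sharing the active sets of $(x^{*},s^{*})$; the paper regards this least-squares viewpoint as part of the contribution. Your approach is strictly more elementary---essentially the $(\hat u,\hat v)=0$ special case without the residual-minimising adjustment---and yields a cleaner proof with no restriction on $\hat\lambda$ and constants depending only on $(H,A)$, but it does not recover this exact-feasibility refinement.
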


\begin{proof}
We work with the equivalent form~\eqref{eqv:perQP_pq} of the problems in~\eqref{eqv:perQP}. 
For convenience, for the rest of this proof, we neglect the dependency of the index sets on  $\optSol$ and use $\Actvqp$, $\Iactvqp$, $\Actvqps$ and $\Sactvqp$ to denote the partition of a matrix or a vector in accordance with the corresponding sets.
Since $\optSol$ is a solution of~\eqref{eqv:orgQP} and from~\eqref{eqv:optActv_qp_x_s}, we have
\begin{equation}
\label{eqv:origin_opt_AxbAtys_partiton_qp}
\begin{aligned}
    x^{*}_{\Actvqp} = 0, \quad x^{*}_{\Iactvqp} > 0 \quad \text{and} \quad s^{*}_{\Actvqps} = 0, \quad s^{*}_{\Sactvqp} > 0,  \\
    A_{\Iactvqp} x^{*}_{\Iactvqp} =  b, \quad  A^{T}_{\Actvqps}y^{*} - H_{\Actvqps\Iactvqp}x^{*}_{\Iactvqp}   =  c_{\Actvqps}, \quad A^{T}_{\Sactvqp}y^{*} + s_{\Sactvqp}^{*} - H_{\Sactvqp\Iactvqp}x^{*}_{\Iactvqp}   = c_{\Sactvqp},
\end{aligned}
\end{equation}
where $H_{XY}$ denotes $(H_{ij})_{i \in X, j \in Y}$.
We define a point $(\hat{p},\hat{y},\hat{q})$ to be 
\begin{equation}
\label{eqv:eqv:preserving_qp_primaldual}
\begin{aligned}
  \hat{p}_{\Actvqp}  =  0,  \quad \hat{p}_{\Iactvqp}  = x^{*}_{\Iactvqp} + \lambda_{\Iactvqp} + \hat{u}, \\
  \hat{y}  = y^{*}+ \hat{v}, \quad \hat{q}_{\Actvqps} = 0,  \quad \hat{q}_{\Sactvqp}  =  s^{*}_{\Sactvqp} + \lambda _{\Sactvqp} - H_{\Sactvqp\Actvqp}\lambda_{\Actvqp} - A_{\Sactvqp}^{T}\hat{v} + H_{\Sactvqp\Iactvqp} \hat{u}, 
\end{aligned}
\end{equation}
where $(\hat{u},\hat{v})$ is the minimal least-squares solution of 
\begin{equation}
\label{eqv:proof_ls}
	M
	\begin{bmatrix}
		u \\
		v
	\end{bmatrix}
	=
	W
	\begin{bmatrix}
		\lambda_{\Actvqp} \\
		\lambda_{\Actvqps}
	\end{bmatrix},
 \text{ with }
	M = 
	\begin{bmatrix}
		A_{\Iactvqp} & 0 \\
		-H_{\Actvqps\Iactvqp} & A^{T}_{\Actvqps}
	\end{bmatrix}
	\text{ and }
	W = \begin{bmatrix}
		A_{\Actvqp} & 0 \\
		-H_{\Actvqps\Actvqp} &  I_{\Actvqps}
	\end{bmatrix}.
\end{equation}
We are about to find conditions on $\lambda$ under which $\hat{p}_{\Iactvqp} > 0$ and $\hat{q}_{\Sactvqp} > 0$, and thus we can have~\eqref{eqv:kkt_perturbedQP_cpc},~\eqref{eqv:kkt_perturbedQP-nonneg} and~\eqref{eqv:set_lambda_set_optsol} hold. From~\cite[Theorem 2.2.1]{Stephen2009},
we have
\[
\begin{bmatrix}
\hat{u} \\
\hat{v}
\end{bmatrix}
=
M^{+}W
\begin{bmatrix}
	\lambda_{\Actvqp} \\
	\lambda_{\Actvqps}
\end{bmatrix},
\] 
where $M^{+}$ is the pseudo-inverse of $M$. 
This and norm properties give us
\begin{equation}
\label{eqv:preserving_qp_uvBounds}
\| (\hat{u},\hat{v}) \| \leq \| M^{+}W \| \cdot ( \|\lambda_{\Actvqp}\| + \|\lambda_{\Actvqps}\| ) \leq 2\|M^{+}W\|\cdot\|\lambda\|.
\end{equation}
Let 
\[
\hat{\lambda}  =  \min \left(\, 
\frac{\vmin{x^{*}_{\Iactvqp}}{s^{*}_{\Sactvqp}}}{2\|M^{+} W\|}   
\,,\,
\frac{\vmin{x^{*}_{\Iactvqp}}{s^{*}_{\Sactvqp}}}{ \|H_{\Sactvqp\Actvqp}\| + 2\left( \|A^{T}_{\Sactvqp}\| + \| H_{\Sactvqp\Iactvqp} \| \right) \|M^{+} W\| }   
\,\right),
\]
where $\vmin{x^{*}_{\Iactvqp}}{s^{*}_{\Sactvqp}}$ denotes the smallest elements of the vectors $x^{*}_{\Iactvqp}$ and $s^{*}_{\Sactvqp}$.
This, \eqref{eqv:eqv:preserving_qp_primaldual}, \eqref{eqv:preserving_qp_uvBounds}, $0 < \|\lambda\| < \hat{\lambda}$ and norm properties give us that
\[
\hat{p}_{\Iactvqp} \geq  x^{*}_{\Iactvqp} + \hat{u} \geq x^{*}_{\Iactvqp} - \|\hat{u}\|e_{\Iactvqp} > x^{*}_{\Iactvqp} - 2\hat{\lambda}\|M^{+}W\|e_{\Iactvqp} \geq 0
\] 
and
\begin{eqnarray*}
\hat{q}_{\Sactvqp} &\geq & s^{*}_{\Sactvqp} - H_{\Sactvqp\Actvqp}\lambda_{\Actvqp} - A_{\Sactvqp}^{T}\hat{v} + H_{\Sactvqp\Iactvqp} \hat{u} \\
	& \geq & s^{*}_{\Sactvqp} - ( \|H_{\Sactvqp\Actvqp}\| \cdot \|\lambda\| + \|A_{\Sactvqp}^{T}\| \cdot \|\hat{v}\| + \|H_{\Sactvqp\Iactvqp}\| \cdot \|\hat{u}\| ) \\
	& > & s^{*}_{\Sactvqp} - \left( \|H_{\Sactvqp\Actvqp}\| + 2\|M^{+}W\|  ( \|A_{\Sactvqp}^{T}\|+ \|H_{\Sactvqp\Iactvqp}\|) \right) \hat{\lambda}  \geq 0.
\end{eqnarray*}
It remains to prove~\eqref{eqv:fea_errer}.
From~\eqref{eqv:b_hat_c_hat},~\eqref{eqv:origin_opt_AxbAtys_partiton_qp}, and~\eqref{eqv:eqv:preserving_qp_primaldual}, we can verify
\begin{equation}
\label{eqv:feasibleConsQP_2}
\begin{array}{rll}
A\hat{p} - \hat{b}_{\lambda} & = & A_{\Iactvqp} \hat{u} - A_{\Actvqp}\lambda_{\Actvqp} \\[1ex]
			  & = &  \left( M_{1}
	\begin{bmatrix}
		\hat{u} \\
		\hat{v}
	\end{bmatrix} - 
	W_{1}
	\begin{bmatrix}
		\lambda_{\Actvqp} \\
		\lambda_{\Actvqps}
	\end{bmatrix} \right), \\[3ex]
A^{T}_{\Actvqps}\hat{y} + \hat{q}_{\Actvqps} - H_{\Actvqps\Iactvqp}\hat{p}_{\Iactvqp} - (\hat{c}_{\lambda})_{\Actvqps} & = &  -H_{\Actvqps\Iactvqp}\hat{u} + A^{T}_{\Actvqps}\hat{v} + H_{\Actvqps\Actvqp}\lambda_{\Actvqp} -\lambda_{\Actvqps}  \\[3ex]
& = &
\left( 
	M_{2}
	\begin{bmatrix}
		\hat{u} \\
		\hat{v}
	\end{bmatrix} - 
	W_{2}
	\begin{bmatrix}
		\lambda_{\Actvqp} \\
		\lambda_{\Actvqps}
	\end{bmatrix} \right), \\[3ex]
A^{T}_{\Sactvqp} \hat{y} + \hat{q}_{\Sactvqp} - H_{\Sactvqp\Iactvqp}\hat{p}_{\Iactvqp} - (\hat{c}_{\lambda})_{\Sactvqp} & =  & 0,
\end{array}
\end{equation}
where 
\[
M_{1} = 
\begin{bmatrix}
A_{\Iactvqp} & 0 
\end{bmatrix}, 
M_{2} = \begin{bmatrix}
-H_{\Actvqps\Iactvqp} & A^{T}_{\Actvqps}
	\end{bmatrix}, 
W_{1} = \begin{bmatrix}
	A_{\Actvqp} & 0
\end{bmatrix}
\quad\text{and}\quad
W_{2} =\begin{bmatrix}
		-H_{\Actvqps\Actvqp} & I_{\Actvqps}
	\end{bmatrix}.
\]
Since $(\hat{u},\hat{v})$ is the least-squares solution of~\eqref{eqv:proof_ls}, 
\[
M = \begin{bmatrix}
M_{1} \\
M_{2} 
\end{bmatrix}
 \quad\text{and}\quad 
W = \begin{bmatrix}
W_{1} \\
W_{2}
\end{bmatrix},
\]
we have
\begin{equation*}
\label{eqv:preserving_qp_FeasBoudns}
\begin{aligned}
\|A\hat{p} - \hat{b}_{\lambda}\| 
\leq \left\| M
	\begin{bmatrix}
		\hat{u} \\
		\hat{v}
	\end{bmatrix} - 
	W
	\begin{bmatrix}
		\lambda_{\Actvqp} \\
		\lambda_{\Actvqps}
	\end{bmatrix} \right\|
\leq \left\|
	W
	\begin{bmatrix}
		\lambda_{\Actvqp} \\
		\lambda_{\Actvqps}
	\end{bmatrix} \right\|
\leq 2\|W\| \|\lambda\|, \\
\|A^{T}_{\Actvqps}\hat{y} + \hat{q}_{\Actvqps} - H_{\Actvqps\Iactvqp}\hat{p}_{\Iactvqp} - (\hat{c}_{\lambda})_{\Actvqps}\|
\leq  \left\| M
	\begin{bmatrix}
		\hat{u} \\
		\hat{v}
	\end{bmatrix} - 
	W
	\begin{bmatrix}
		\lambda_{\Actvqp} \\
		\lambda_{\Actvqps}
	\end{bmatrix} \right\|
\leq \left\|
	W
	\begin{bmatrix}
		\lambda_{\Actvqp} \\
		\lambda_{\Actvqps}
	\end{bmatrix} \right\|
\leq 2\|W\| \|\lambda\|.
\end{aligned}
\end{equation*}
This and~\eqref{eqv:feasibleConsQP_2} imply that 
\begin{equation}
\label{eqv:bound_on_feasiblity_qp}
	\max\left(\|A\hat{p} - \hat{b}_{\lambda}\|, \|A^{T}\hat{y} + \hat{q} - H\hat{p} - \hat{c}_{\lambda}\|\right) \leq 2 \|W\|\|\lambda\|.
\end{equation}
\end{proof}

\paragraph{Remarks on Theorem~\ref{thm:preserv_actv_qp}}
\label{rem-preser_actv_qp}
\begin{itemize}
\item The point $(x, y, s)$ satisfies the bound~\eqref{eqv:kkt_perturbedQP-nonneg} on $(x, s)$ and the complementary condition~\eqref{eqv:kkt_perturbedQP_cpc}. Thus
the error~\eqref{eqv:fea_errer} in the equality constraints~(\ref{eqv:fea-primal},~\ref{eqv:fea-dual}) also bounds the `distance' between $(x, y, s)$ and the optimal solution set of~\eqref{eqv:perQP}. This feasibility error~\eqref{eqv:fea_errer} goes to 0 as $\lambda \to 0$, and so primal and dual feasibility can be approximately achieved.
Note that, the feasibility error comes from the residual of the least problem~\eqref{eqv:proof_ls}, in other words, if~\eqref{eqv:proof_ls} has a solution, $(x, y, s)$ will be an optimal solution of~\eqref{eqv:perQP} with $\lambda > 0$, at which the primal-dual active sets of~\eqref{eqv:perQP} are the same as the original~\eqref{eqv:orgQP}.
\item Relation \eqref{eqv:bound_on_feasiblity_qp} gives an upper bound on the feasibility constraints of the equivalent form~\eqref{eqv:perQP_pq} of~\eqref{eqv:perQP}. Setting $\hat{x} = \hat{p} - \lambda$ and $\hat{s} = \hat{q} - \lambda$, we can see this bound is also an upper bound for the feasibility constraints of~\eqref{eqv:orgQP}.
\end{itemize}

\paragraph*{Preserving the optimal tripartition}
In~\eqref{eqv:tripttn_qp_T}, we have defined the complement of the optimal primal and dual inactive sets. Similarly, we denote
\begin{equation}
 \label{eqv:perturbed_tripttn_qp_T}
 \T_{\lambda}(x,s) = \{1,\ldots,n \} \setminus \left(\Sactvqp_{\lambda}(s) \cup \Iactvqp_{\lambda}(x)\right),
\end{equation}
where $\Sactvqp_{\lambda}(s)$ and $\Iactvqp_{\lambda}(x)$ are defined in~\eqref{eqv:perturbed_actv_qp_x_s}.
Note that without the complementary condition~\eqref{eqv:kkt_perturbedQP_cpc}, $(\Sactvqp_{\lambda}(s),\Iactvqp_{\lambda}(x),  \T_{\lambda}(x,s))$ may not form a tripartition of the full index set.
In the following corollary,  we show that under certain conditions on the perturbations, there exists a primal-dual pair which is close to (ultimately in) the solution set of~\eqref{eqv:perQP}, such that $(\Sactvqp_{\lambda}(s),\Iactvqp_{\lambda}(x),  \T_{\lambda}(x,s))$ forms a tripartition and it is the same as the tripartition $(\Sactvqp(s^{*}),\Iactvqp(x^{*}),\T(x^{*},s^{*}))$ at an optimal solution $\optSol$ of~\eqref{eqv:orgQP}.

\begin{corollary}
\label{col-preserv_tripttn_qp}
Assume $\optSol$ is an optimal solution of~\eqref{eqv:orgQP}. Then there exist a positive constant $\hat{\lambda} = \hat{\lambda}(H,A,b,c,x^{*},s^{*})$,  a positive constant $C_{1} = C_{1} (H,A, x^{*},s^{*})$ and a primal-dual pair $(x,y,s)$ which satisfies~(\ref{eqv:kkt_perturbedQP_cpc},~\ref{eqv:kkt_perturbedQP-nonneg}) with $0 < \| \lambda \| < \hat{\lambda}$, such that $(\Sactvqp_{\lambda}(s),\Iactvqp_{\lambda}(x),  \T_{\lambda}(x,s))$  forms a tripartition of $\{1,\ldots,n\}$ and is the same as the partition $(\Sactvqp(s^{*}),\Iactvqp(x^{*}),\T(x^{*},s^{*}))$ for the original~\eqref{eqv:orgQP} with~\eqref{eqv:fea_errer} satisfied,  
where $\Sactvqp(s^{*})$ and $\Iactvqp(x^{*})$ are defined in~\eqref{eqv:optActv_qp_x_s}, $\T(x^{*},s^{*})$ in~\eqref{eqv:tripttn_qp_T}, $\Sactvqp_{\lambda}(s)$ and $\Iactvqp_{\lambda}(x)$ in~\eqref{eqv:perturbed_actv_qp_x_s} and $\T_{\lambda}(x,s)$ in~\eqref{eqv:perturbed_tripttn_qp_T}.
\end{corollary}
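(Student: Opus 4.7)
The plan is to obtain the corollary as a direct consequence of Theorem~\ref{thm:preserv_actv_qp}, since the tripartition data are already encoded in the optimal primal and dual active/inactive sets. First I would invoke Theorem~\ref{thm:preserv_actv_qp} to produce constants $\hat{\lambda}$ and $C_{1}$ and a primal-dual pair $(x,y,s)$ satisfying the complementary condition~\eqref{eqv:kkt_perturbedQP_cpc}, the bound~\eqref{eqv:kkt_perturbedQP-nonneg} and the feasibility error estimate~\eqref{eqv:fea_errer}, and such that~\eqref{eqv:set_lambda_set_optsol} holds. The same $\hat\lambda$ and $C_{1}$ will serve for the corollary.

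Next I would verify that $(\Sactvqp_{\lambda}(s),\Iactvqp_{\lambda}(x),\T_{\lambda}(x,s))$ is a genuine tripartition of $\{1,\ldots,n\}$. By the definition~\eqref{eqv:perturbed_tripttn_qp_T}, the three sets cover $\{1,\ldots,n\}$ and $\T_{\lambda}(x,s)$ is disjoint from each of $\Sactvqp_{\lambda}(s)$ and $\Iactvqp_{\lambda}(x)$. The only nontrivial check is $\Sactvqp_{\lambda}(s)\cap \Iactvqp_{\lambda}(x)=\emptyset$: if $i$ belonged to both, then by~\eqref{eqv:perturbed_actv_qp_x_s} we would have $x_{i}+\lambda_{i}>0$ and $s_{i}+\lambda_{i}>0$, so $(x_{i}+\lambda_{i})(s_{i}+\lambda_{i})>0$, contradicting the complementarity equation~\eqref{eqv:kkt_perturbedQP_cpc}. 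Applied to the original solution $(x^{*},s^{*})$ with $\lambda=0$ this same reasoning recovers the already noted fact that $(\Sactvqp(s^{*}),\Iactvqp(x^{*}),\T(x^{*},s^{*}))$ is a tripartition, so both tripartitions are well defined.

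Finally I would match the two tripartitions componentwise. By~\eqref{eqv:set_lambda_set_optsol}, $\Sactvqp_{\lambda}(s)=\Sactvqp(s^{*})$ and $\Iactvqp_{\lambda}(x)=\Iactvqp(x^{*})$. Taking complements of the unions in~\eqref{eqv:tripttn_qp_T} and~\eqref{eqv:perturbed_tripttn_qp_T} then forces $\T_{\lambda}(x,s)=\T(x^{*},s^{*})$, which is the desired equality of tripartitions; the bound~\eqref{eqv:fea_errer} is inherited directly from Theorem~\ref{thm:preserv_actv_qp}.

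There is no real obstacle here: the entire statement is a repackaging of Theorem~\ref{thm:preserv_actv_qp} together with the elementary observation that complementarity rules out simultaneous strict inequality in $x_{i}+\lambda_{i}$ and $s_{i}+\lambda_{i}$. The only point requiring care is that $\T_{\lambda}(x,s)$ is defined purely as a set difference and is not guaranteed a priori to be disjoint from $\Sactvqp_{\lambda}(s)\cup\Iactvqp_{\lambda}(x)$ in a meaningful tripartition sense unless $\Sactvqp_{\lambda}(s)\cap\Iactvqp_{\lambda}(x)=\emptyset$, which is exactly what complementarity supplies.
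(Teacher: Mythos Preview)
Your proposal is correct and follows essentially the same approach as the paper: the paper's proof simply says that, recalling the definitions of $\T(x^{*},s^{*})$ and $\T_{\lambda}(x,s)$, the result follows from Theorem~\ref{thm:preserv_actv_qp}. You have spelled out more carefully why complementarity forces $\Sactvqp_{\lambda}(s)\cap\Iactvqp_{\lambda}(x)=\emptyset$, which the paper leaves implicit, but the underlying argument is the same.
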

\begin{proof}
Recalling the definitions of  $\T(x^{*},s^{*})$ and $\T_{\lambda}(x,s)$, the results follow from Theorem~\ref{thm:preserv_actv_qp}.
\end{proof}

Corollary~\ref{col-preserv_tripttn_qp} shows that under the same conditions for Theorem~\ref{thm:preserv_actv_qp}, there exists a point that is close to the solution set of the perturbed problems and preserves the optimal tripartition of the original {\qp}.  This point can be an optimal solution of~\eqref{eqv:perQP} as well.

\section{Active-set prediction for~\eqref{eqv:orgQP} using perturbations}
\label{sec:predict_actv_qp}
We first introduce an error bound for~\eqref{eqv:orgQP} to measure the distance of a point to the solution set of~\eqref{eqv:orgQP}. We have derived an error bound for {\lp} in~\cite[Lemma 4.1]{CartisYan2014} and the following lemma is its extension to {\qp}.

\begin{lemma}[Error bound for~\eqref{eqv:orgQP}]
\label{lem-eb_qp}
Let $\pntp \in \sfspqp$, where $\sfspqp$ is defined in~\eqref{eqv:perturbed_strcitly_fea_set}, and $\lambda \geq 0$. Then there exists an optimal solution $\optSol$ of~\eqref{eqv:orgQP} such that  
\begin{equation}
\label{eqv:eb-qp}
	\| x - x^{*}\| \leq \tau_{p} (r(x,s) + w(x,s)) \quad \text{and} \quad \|s-s^{*}\| \leq \tau_{d} (r(x,s) + w(x,s)),
\end{equation}
where $\tau_{p}$ and $\tau_{q}$ are problem-dependent constants independent of $\pntp$ and $\optSol$, and
\begin{equation}
\label{eqv:error_bounds_feasible_x_s_qp}
	r( x, s )= \|\cmin{ x }{ s }\|
\quad \text{and} \quad
w( x, s ) = \|(-x, -s, x^{T} s )_{+}\|,
\end{equation}
and where $\cmin{x}{s} = \left(\, \min(x_{i},s_{i}) \,\right)_{ i = 1,\ldots, n }$ and $(x)_{+} = \left(\, \max(x_{i},0) \,\right)_{ i = 1,\ldots, n }$.
\end{lemma}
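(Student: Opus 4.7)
The plan is to extend the LP error bound of~\cite[Lemma 4.1]{CartisYan2014} to the convex QP case by exploiting the fact that the KKT system of~\eqref{eqv:orgQP} is a monotone mixed linear complementarity problem. The first observation is that any $\pntp \in \sfspqp$ satisfies the affine equalities $Ax = b$ and $A^{T}y + s - Hx = c$ exactly, so the only possible failures of optimality are (i) sign-violations of $x_{i}$ or $s_{i}$ (the set $\sfspqp$ only enforces $x \geq -\lambda$ and $s \geq -\lambda$) and (ii) componentwise or aggregate complementarity violations. The residuals $r(x,s) = \|\cmin{x}{s}\|$ and $w(x,s) = \|(-x, -s, x^{T}s)_{+}\|$ capture exactly these two failure modes, which is why they are the only quantities appearing on the right-hand side of~\eqref{eqv:eb-qp}.

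My strategy is to first perform a projection/correction step that moves $\pntp$ into the primal-dual feasible set of~\eqref{eqv:orgQP} with a controlled increase in the residual, and then apply a classical LCP-type error bound at the corrected point. Concretely, setting $\bar{x} = (x)_{+}$ and $\bar{s} = (s)_{+}$ yields $\|\bar{x} - x\| + \|\bar{s} - s\| \leq w$; Hoffman's lemma applied to the primal feasible polyhedron $\{x : Ax = b,\, x \geq 0\}$ and to the corresponding dual polyhedron then produces strictly feasible surrogates at distance $O(w)$ from $(\bar{x}, \bar{s})$. The natural complementarity residual of the surrogate pair is at most $r + O(w)$, so all overheads are absorbed into the bound $r + w$.

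Next, I would invoke a global error bound for the monotone LCP arising from the KKT conditions of~\eqref{eqv:orgQP}; monotonicity follows from positive semidefiniteness of $H$. The Luo--Tseng theorem, or the Mangasarian--Shiau error bound specialised to convex QP, furnishes a constant $\tau > 0$ depending only on the data $(H, A, b, c)$ such that the distance from any primal-dual feasible triple to the optimal set is at most $\tau$ times its natural complementarity residual. Combining this with the correction step, choosing $\optSol$ to attain the minimum distance, and splitting the Euclidean norm on $(x, s)$ into its two blocks then delivers the separate constants $\tau_{p}$ and $\tau_{d}$ in~\eqref{eqv:eb-qp}.

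The main obstacle will be establishing that the error bound constant $\tau$ is genuinely uniform over the region traversed by points $\pntp \in \sfspqp$ and depends only on the problem data. For LP this is immediate by applying Hoffman's lemma to the finitely many polyhedral optimal faces, but the QP case must cope with the bilinear term $(x^{T}s)_{+}$. The standard workaround is either a compactness and contradiction argument in the spirit of Luo--Tseng, or to bound $|x^{T}s|$ componentwise in terms of $\min(x_{i}, s_{i})$, $(-x_{i})_{+}$ and $(-s_{i})_{+}$ when $(x, s)$ is close to nonnegative, thereby reducing the bilinear residual to a sum of linear residuals already controlled by $r$ and $w$. Once the uniform constant is in hand, the remaining bookkeeping that matches the statement~\eqref{eqv:eb-qp} is routine.
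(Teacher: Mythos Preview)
Your high-level idea (reformulate the KKT system as a monotone LCP and invoke a Mangasarian-type global error bound) is exactly what the paper does, but you have inserted an unnecessary projection/correction step that both complicates the argument and creates the very gap you flag at the end.

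The paper's route is shorter. It writes $z=(x,y^{+},y^{-})$, shows the associated LCP matrix is positive semidefinite (so the LCP is monotone), and applies the Mangasarian--Ren global error bound \emph{directly} to this $z$: for any $z$, $\|z-z^{*}\|\le\tau\bigl(r(z)+w(z)\bigr)$ with $r(z)=\|z-(z-Mz-q)_{+}\|$ and $w(z)=\|(-Mz-q,\,-z,\,z^{T}(Mz+q))_{+}\|$. Because $\pntp\in\sfspqp$ already satisfies $Ax=b$ and $A^{T}y+s-Hx=c$, one computes $Mz+q=(s,0,0)$, hence $r(z)=\|\cmin{x}{s}\|$ and $w(z)=\|(-x,-s,x^{T}s)_{+}\|$ on the nose (the $y^{\pm}$ blocks drop out since $y^{\pm}\ge0$, and $c^{T}x-b^{T}y+x^{T}Hx=x^{T}s$ under the two equalities). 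That is the whole proof; no projection is needed because the Mangasarian--Ren bound is stated for \emph{arbitrary} points, with infeasibility already absorbed into $w$.

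Your detour through a Hoffman projection to feasibility is where the trouble enters. After replacing $(x,s)$ by feasible surrogates $(\hat{x},\hat{s})$ at distance $O(w)$, you need the residual at the surrogate to be $O(r+w)$. The componentwise term $\|\cmin{\hat{x}}{\hat{s}}\|$ is fine, but the bilinear term $\hat{x}^{T}\hat{s}$ differs from $x^{T}s$ by $O(w)\cdot(\|x\|+\|\hat{s}\|)$, and nothing in the hypotheses bounds $\|x\|$ or $\|s\|$. Your proposed fix, bounding $|x^{T}s|$ componentwise by $\min(x_{i},s_{i})$, $(-x_{i})_{+}$, $(-s_{i})_{+}$, is false in general (take $x_{i}=s_{i}=M$ large: the product is $M^{2}$ while all three controlling quantities are $M$ or $0$). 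The compactness/contradiction alternative likewise needs an a priori bounded region, which is not assumed. So the projection route genuinely has a hole, whereas applying the global LCP bound directly to the unprojected point sidesteps the issue entirely: $(x^{T}s)_{+}$ is already one of the components defining $w(x,s)$, so there is nothing to estimate.
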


See~\ref{apd-proof_lemma_eb_qp} for the proof of this lemma.

We define a symmetric neighbourhood~\cite{Gondzio25year} of the perturbed central path~\eqref{eqv:perturbedCentralPathQP},
\begin{equation}
\label{eqv:symmetric_neighbourhood}
{\Nsymp} = \left\{\, (x,y,s) \in {\sfspqp}\,|\, \gamma\mup \leq  (x_{i}+\lambda_{i})(s_{i} + \lambda_{i}) \leq \frac{\mup}{\gamma},  \, i = 1,\ldots,n\,\,\right\}, 
\end{equation}
where $\gamma \in (0,1)$ and $\mup$ is defined as
\begin{equation}
\label{eqv:mup}
\mup = \frac{(x+\lambda)^{T}(s+\lambda)}{n}.
\end{equation}
In the following analysis of predicting  the optimal active set (Section~\ref{subsec-predict_opt_actv}) and tripartition (Section~\ref{subsec-predict_tripartition}), we always consider points in this neighbourhood. 

\begin{lemma}
\label{lem-upper_bound_norm_x_xstar_s_sstar}
Let $\pntp \in \Nsymp$~\eqref{eqv:symmetric_neighbourhood} for some $\lambda \geq 0$ and $\mup$ defined in~\eqref{eqv:mup}. Then there exists a solution $\optSol$ of~\eqref{eqv:orgQP} and problem-dependent constants $\tau_{p}$ and $\tau_{d}$ that are independent of $\pntp$ and $\optSol$, such that
\begin{equation}
\label{eqv:bounds_x_xstar_on_mu_lambda_qp}
\begin{aligned}
	\|x-x^{*}\| < \tau_{p} \left( C_{2} \sqrt{\mup} \max(\sqrt{\mup}, 1) + 4 \|\lambda\| \max\left(\|\lambda\|,1\right) \right), \\
	\|s-s^{*}\| < \tau_{d} \left( C_{2} \sqrt{\mup} \max(\sqrt{\mup}, 1) + 4 \|\lambda\| \max\left(\|\lambda\|,1\right) \right),
\end{aligned}
\end{equation}
where  
\begin{equation}
\label{eqv:C4_qp}
	C_{2}  = \sqrt{\frac{n}{\gamma}} + n .
\end{equation} 
\end{lemma}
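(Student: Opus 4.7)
The plan is to invoke the error bound Lemma~\ref{lem-eb_qp} to reduce the task to controlling $r(x,s)$ and $w(x,s)$ from~\eqref{eqv:error_bounds_feasible_x_s_qp} in terms of $\mu_\lambda$ and $\|\lambda\|$, then to verify that the chosen constant $C_2$ in~\eqref{eqv:C4_qp} dominates the resulting bound. Because $\pntp \in \Nsymp \subseteq \sfspqp$, the hypotheses of Lemma~\ref{lem-eb_qp} are met, so the existence of $\optSol$ with $\|x-x^*\| \leq \tau_p(r(x,s) + w(x,s))$ (and analogously for $s$) is immediate; the whole content of the proof is the estimation of $r$ and $w$.

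To bound $r(x,s) = \|\cmin{x}{s}\|$, I would argue componentwise. For each index $i$, since $x_i+\lambda_i>0$ and $s_i+\lambda_i>0$, the smaller of the two satisfies $\min(x_i+\lambda_i, s_i+\lambda_i)^2 \leq (x_i+\lambda_i)(s_i+\lambda_i) \leq \mu_\lambda/\gamma$, so $\min(x_i+\lambda_i, s_i+\lambda_i) \leq \sqrt{\mu_\lambda/\gamma}$. Subtracting $\lambda_i$ and applying a triangle-type bound yields $|\min(x_i, s_i)| \leq \sqrt{\mu_\lambda/\gamma} + \lambda_i$, so the Euclidean norm gives $r(x,s) \leq \sqrt{n\mu_\lambda/\gamma} + \|\lambda\|$.

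For $w(x,s)$, the two negative-part terms are easy: $x_i > -\lambda_i$ with $\lambda_i \geq 0$ implies $(-x_i)_+ \leq \lambda_i$, so $\|(-x)_+\| \leq \|\lambda\|$ and similarly for $s$. The key manipulation is the scalar term $(x^T s)_+$. Here I would use the expansion $x_i s_i = (x_i+\lambda_i)(s_i+\lambda_i) - \lambda_i(x_i+\lambda_i) - \lambda_i(s_i+\lambda_i) + \lambda_i^2$, sum over $i$, invoke $\sum_i (x_i+\lambda_i)(s_i+\lambda_i) = n\mu_\lambda$ (definition of $\mu_\lambda$ in~\eqref{eqv:mup}), and drop the two middle terms using $\lambda \geq 0$ together with $x+\lambda>0$, $s+\lambda>0$. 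This yields $x^T s \leq n\mu_\lambda + \|\lambda\|^2$, hence $(x^T s)_+ \leq n\mu_\lambda + \|\lambda\|^2$. Applying $\|(a,b,c)\| \leq \|a\|+\|b\|+|c|$ then gives $w(x,s) \leq 2\|\lambda\| + n\mu_\lambda + \|\lambda\|^2$.

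Adding the two estimates produces $r(x,s) + w(x,s) \leq \sqrt{n\mu_\lambda/\gamma} + n\mu_\lambda + 3\|\lambda\| + \|\lambda\|^2$. The last step matches this to the target form via a short case analysis: split on whether $\mu_\lambda \lessgtr 1$ to verify $C_2\sqrt{\mu_\lambda}\max(\sqrt{\mu_\lambda},1) \geq \sqrt{n\mu_\lambda/\gamma} + n\mu_\lambda$ using $C_2 = \sqrt{n/\gamma}+n$, and split on whether $\|\lambda\| \lessgtr 1$ to verify $4\|\lambda\|\max(\|\lambda\|,1) \geq 3\|\lambda\| + \|\lambda\|^2$. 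Plugging back into Lemma~\ref{lem-eb_qp} delivers~\eqref{eqv:bounds_x_xstar_on_mu_lambda_qp}. The only slightly delicate step is the algebra for $(x^Ts)_+$, since it is the one place where the presence of $\lambda$ changes the standard (unperturbed) argument; everything else is a careful but routine mix of the neighbourhood inequality and componentwise sign information.
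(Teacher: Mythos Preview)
Your proposal is correct and follows essentially the same approach as the paper: apply the error bound Lemma~\ref{lem-eb_qp}, bound $r(x,s)$ via the neighbourhood inequality $(x_i+\lambda_i)(s_i+\lambda_i)\le\mu_\lambda/\gamma$, bound $w(x,s)$ by handling $(-x)_+$, $(-s)_+$, and $(x^Ts)_+$ separately, and then absorb the resulting expression into the target using $C_2$. The only cosmetic differences are that the paper cites \cite[Lemma~4.3]{CartisYan2014} for the estimate $w(x,s)\le n\mu_\lambda+2\|\lambda\|+\|\lambda\|^2$ (which you re-derive from scratch via the expansion of $x_is_i$) and leaves the final case split on $\mu_\lambda\lessgtr1$, $\|\lambda\|\lessgtr1$ implicit, whereas you spell it out.
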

\begin{proof}
Following the same proof of \cite[Lemma 4.3]{CartisYan2014}, we  have
\begin{equation}
\label{eqv:proof_w_qp}
		w(x,s) \leq n \mup + 2\|\lambda\| + \|\lambda\|^{2}.
\end{equation}
It remains to find an upper bound for $r(x,s)$ in~\eqref{eqv:error_bounds_feasible_x_s_qp}.
Since  $(x_{i} + \lambda_{i})(s_{i} + \lambda_{i}) \leq \frac{1}{\gamma} \mup$, 
if $x_{i} + \lambda_{i}  \leq s_{i} + \lambda_{i}  $, we have
\[
0<x_{i} + \lambda_{i} \leq \frac{\mup}{\gamma(s_{i} + \lambda_{i})} \leq \frac{\mup}{\gamma(x_{i} + \lambda_{i})}, 
\]
namely 
$
0 < x_{i} + \lambda_{i} \leq \sqrt{\frac{\mup}{\gamma}}.
$
Similarly if $x_{i} + \lambda_{i}  > s_{i} + \lambda_{i}  $, we also have 
$
0 < s_{i} + \lambda_{i} < \sqrt{\frac{\mup}{\gamma}}.
$
Thus
$
0 < \cmin{x+\lambda}{s+\lambda}\leq \sqrt{\frac{\mup}{\gamma}} e.  
$
So from~\eqref{eqv:error_bounds_feasible_x_s_qp} we have
\begin{equation}
\label{eqv:proof_r_qp}
\begin{array}{rcl}
r(x,s) & =  &\|\cmin{x+\lambda}{s+\lambda} - \lambda\|  \\
	& \leq & \|\cmin{x+\lambda}{s+\lambda}\| + \|\lambda\| \\ 
	&  \leq & \sqrt{\frac{n\mup}{\gamma}} + \|\lambda\|.
\end{array}
\end{equation}
The bounds in~\eqref{eqv:bounds_x_xstar_on_mu_lambda_qp} follow from~\eqref{eqv:eb-qp},~\eqref{eqv:proof_w_qp}, and~\eqref{eqv:proof_r_qp}.
\end{proof}

\subsection{Predicting the  original optimal active set}
\label{subsec-predict_opt_actv}
Let 
\begin{equation}
\label{eqv:predictedSets}
\begin{array}{rcl}
	\pacs (x)  & = & \left\{ i \in \{1,\ldots, n\} \, | \, x_{i}  <  C \right\}, \\
	\psas  (s) & = & \left\{ i \in \{1,\ldots, n\} \, | \, s_{i} \geq C \right\}, 
\end{array}
\end{equation}
where $C$ is some constant threshold. We consider $\pacs(x)$ as the predicted active set and $\psas(s)$ the  predicted strongly  active set of~\eqref{eqv:orgQP} at the primal-dual pair $\pntp$. 

We show that prediction results  for {\lp} (Theorems 4.4 -- 4.6 in~\cite{CartisYan2014}) can be  extended to  the {\qp} case, namely, under certain conditions, the active sets $\Actvqp(x^{*})$ and $\Sactvqp(s^{*})$ at some solution $\optSol$ of~\eqref{eqv:orgQP} are bounded  well by $\psasqp(s)$ and $\pacsqp(x)$ below and above (Theorem~\ref{thm:bound_active_set_qp}), and under stricter conditions, the predicted active set $\pacsqp(x)$ is equivalent to $\Actvqp(x^{*})$ (Theorem~\ref{thm:predict_active_set_qp}) and the predicted strongly active set  $\psasqp(s)$ equivalent to $\Sactvqp(s^{*})$ (Theorem~\ref{thm:predict_strictly_active_set_qp}).

\begin{theorem}
\label{thm:bound_active_set_qp}
Let $C > 0$ and fix the vector of perturbations $\lambda$ such that
\begin{equation}
\label{eqv:thm_bound_lambda_included_qp}
	 0 < \|\lambda\| < \min \left( 1, \frac{C}{8 \max(\tau_{p},\tau_{d}) }  \right),
\end{equation}
where $\tau_{p}$ and $\tau_{d}$ are problem-dependent constants in~\eqref{eqv:bounds_x_xstar_on_mu_lambda_qp}.
Let $\pntp \in \Nsymp$ with $\mup$ sufficiently small, namely,
\begin{equation}
\label{eqv:mu_bounds_included_qp}
	\mup < \min\left( 1,\,\left( \frac{C}{2 \max(\tau_{p},\tau_d) C_{2}} \right)^{2} \right),
\end{equation}
where $\Nsymp$ is defined in~\eqref{eqv:symmetric_neighbourhood}, $\mup$ in~\eqref{eqv:mup}in~\eqref{eqv:mup} and $C_{2} > 0$, defined in~\eqref{eqv:C4_qp}, is a problem-dependent constant. Then there exists a  solution $\optSol$ of~\eqref{eqv:orgQP} such that
\begin{equation}
\label{eqv:predictedActv_included_in_actuPartition}
	\psasqp(s) \subseteq \Sactvqp(s^{*}) \subseteq \Actvqp(x^{*}) \subseteq \pacsqp(x),
\end{equation}
where $\psasqp(s)$ and $\pacsqp(x)$  are defined in~\eqref{eqv:predictedSets}, $\Sactvqp(s^{*})$ and $\Actvqp(x^{*})$ in~\eqref{eqv:optActv_qp_x_s}.
\end{theorem}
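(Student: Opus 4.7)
The plan is to combine the error bound of Lemma~\ref{lem-upper_bound_norm_x_xstar_s_sstar} with the two smallness conditions~\eqref{eqv:thm_bound_lambda_included_qp}--\eqref{eqv:mu_bounds_included_qp} to control $\|x-x^{*}\|$ and $\|s-s^{*}\|$ componentwise, and then to push this through the definitions of the predicted and optimal sets. The middle inclusion $\Sactvqp(s^{*})\subseteq \Actvqp(x^{*})$ is not proved here at all: it is already recorded in~\eqref{eqv:set_relations_qp} as a direct consequence of the complementary slackness~\eqref{eqv:kkt_perturbedQP_cpc} (with $\lambda=0$) for the original \eqref{eqv:orgQP}.

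First I would apply Lemma~\ref{lem-upper_bound_norm_x_xstar_s_sstar} to the point $\pntp\in\Nsymp$ to obtain a solution $\optSol$ of \eqref{eqv:orgQP} and the bounds~\eqref{eqv:bounds_x_xstar_on_mu_lambda_qp}. Since the hypotheses of the theorem include $\|\lambda\|<1$ and $\mup<1$, the $\max(\cdot,1)$ factors in~\eqref{eqv:bounds_x_xstar_on_mu_lambda_qp} can be replaced by $1$, so that
\[
\|x-x^{*}\|<\tau_{p}\bigl(C_{2}\sqrt{\mup}+4\|\lambda\|\bigr),\qquad \|s-s^{*}\|<\tau_{d}\bigl(C_{2}\sqrt{\mup}+4\|\lambda\|\bigr).
\]
Next, I would plug in the quantitative assumptions. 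From~\eqref{eqv:thm_bound_lambda_included_qp} we get $4\max(\tau_{p},\tau_{d})\|\lambda\|<C/2$, and from~\eqref{eqv:mu_bounds_included_qp} we get $\max(\tau_{p},\tau_{d})C_{2}\sqrt{\mup}<C/2$. Adding these yields the clean conclusion $\|x-x^{*}\|<C$ and $\|s-s^{*}\|<C$.

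With these componentwise bounds in hand, the two outer inclusions in~\eqref{eqv:predictedActv_included_in_actuPartition} are immediate. For $\Actvqp(x^{*})\subseteq\pacsqp(x)$: if $i\in\Actvqp(x^{*})$ then $x^{*}_{i}=0$, so $x_{i}\leq x^{*}_{i}+|x_{i}-x^{*}_{i}|\leq \|x-x^{*}\|<C$, hence $i\in\pacsqp(x)$ by~\eqref{eqv:predictedSets}. For $\psasqp(s)\subseteq\Sactvqp(s^{*})$: if $i\in\psasqp(s)$ then $s_{i}\geq C$, and $s^{*}_{i}\geq s_{i}-|s_{i}-s^{*}_{i}|> C-C=0$, hence $i\in\Sactvqp(s^{*})$ by~\eqref{eqv:optActv_qp_x_s}.

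I do not expect any serious obstacle here: the work is essentially bookkeeping with the thresholds. The only subtlety is making sure the constants in~\eqref{eqv:bounds_x_xstar_on_mu_lambda_qp} really admit the simplification $\max(\cdot,1)=1$ under the theorem's hypotheses, and that the strict inequalities in~\eqref{eqv:thm_bound_lambda_included_qp}--\eqref{eqv:mu_bounds_included_qp} carry through to give the strict bound $\|s-s^{*}\|<C$, which is what makes the strict positivity $s^{*}_{i}>0$ (rather than just $\geq 0$) possible in the second inclusion.
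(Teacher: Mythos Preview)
Your proposal is correct and follows essentially the same approach as the paper: apply Lemma~\ref{lem-upper_bound_norm_x_xstar_s_sstar}, use $\|\lambda\|<1$ and $\mup<1$ to drop the $\max(\cdot,1)$ factors, then combine~\eqref{eqv:thm_bound_lambda_included_qp} and~\eqref{eqv:mu_bounds_included_qp} to bound each summand by $C/2$ and read off the inclusions from the definitions. Your direct argument for $\psasqp(s)\subseteq\Sactvqp(s^{*})$ via $s^{*}_{i}\geq s_{i}-\|s-s^{*}\|>0$ is in fact slightly cleaner than the paper's, which argues the contrapositive through $\Actvqp(x^{*})$ and literally only concludes $\psasqp(s)\subseteq\Actvqp(x^{*})$.
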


\begin{proof}
We mimic the proof of \cite[Theorem 4.4]{CartisYan2014}. From the complementary condition in~\eqref{eqv:kkt_perturbedQP_cpc} with $\lambda = 0$, it is straightforward to derive $\Sactvqp(s^{*}) \subseteq \Actvqp(x^{*})$.
From $\|\lambda\| < 1$, $\mup < 1$ and~\eqref{eqv:bounds_x_xstar_on_mu_lambda_qp}, we have $\|x - x^{*}\| \leq \tau_p C_{2} \sqrt{\mup} + 4 \tau_p\|\lambda\|$.
This,~\eqref{eqv:thm_bound_lambda_included_qp},  and~\eqref{eqv:mu_bounds_included_qp} give us that when $i \in \Actvqp(x^{*})$, $x^{*}_{i} = 0$ and $ x_{i} \leq \tau_p C_{2} \sqrt{\mup} +4 \tau_p\|\lambda\| < C$. Thus $\Actvqp(x^{*}) \subseteq \pacsqp(x)$. Similarly, if $i \notin \Actvqp(x^{*})$, we have $s^{*}_{i} = 0$ and then $s_{i} \leq \tau_d C_{2} \sqrt{\mup} +4 \tau_d\|\lambda\| < C$, which implies $\psasqp(s) \subseteq \Actvqp(x^{*})$.
\end{proof}

\begin{theorem}
\label{thm:predict_active_set_qp}
Let
\begin{equation}
\label{eqv:psi_p_qp}
	\psi_{p} = \inf_{x^{*} \in \sspoqp}\min_{i \in \Iactvqp(x^{*})} x_{i}^{*},
\end{equation}
where $\sspoqp$ is the solution set of the primal problem in~\eqref{eqv:orgQP}, and $\Iactvqp(s^{*})$ is defined in~\eqref{eqv:optActv_qp_x_s}.
Assume $\psi_{p} > 0$. Fix $C$ and $\lambda$ such that 
\begin{equation}
\label{eqv:C_lambda_Predict_AIT_QP}
C = \frac{\psi_p}{2} \quad 
\text{and} \quad
0 < \|\lambda\| < \min\left( 1,  \frac{\psi_p}{16\max(\tau_{p},\tau_d)} \right).
\end{equation}
Let $ (x,y,s) \in  \Nsymp$ with $\mup$ sufficiently small, namely 
\begin{equation}
\label{eqv:mu_mumax_X_qp}
	\mup < \min\left( 1,\,\left( \frac{\psi_{p}}{4 \max(\tau_{p},\tau_d) C_{2}} \right)^{2} \right),
\end{equation}
where $\tau_{p}$ and $\tau_{d}$ are problem-dependent constants in~\eqref{eqv:bounds_x_xstar_on_mu_lambda_qp}, $\Nsymp$ is defined in~\eqref{eqv:symmetric_neighbourhood}, $\mup$ in~\eqref{eqv:mup} and $C_{2}$ in~\eqref{eqv:C4_qp}.
Then there exists an optimal solution $\optSol$ of~\eqref{eqv:orgQP}, such that 
\[
	\pacsqp(x) = \Actvqp(x^{*}),
\]
where $\pacsqp(x)$ is defined in~\eqref{eqv:predictedSets} and $\Actvqp(x^{*})$ in~\eqref{eqv:optActv_qp_x_s}.
\end{theorem}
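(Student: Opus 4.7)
The plan is to combine the one-sided inclusion $\Actvqp(x^{*}) \subseteq \pacsqp(x)$ coming from Theorem~\ref{thm:bound_active_set_qp} with a reverse inclusion $\pacsqp(x) \subseteq \Actvqp(x^{*})$ that exploits the extra hypothesis $\psi_{p} > 0$. First I would check that the assumptions here are strictly stronger than those in Theorem~\ref{thm:bound_active_set_qp}: with $C = \psi_{p}/2$ the bound~\eqref{eqv:thm_bound_lambda_included_qp} becomes $\|\lambda\| < \psi_{p}/(16\max(\tau_{p},\tau_{d}))$ and the bound~\eqref{eqv:mu_bounds_included_qp} becomes $\mup < (\psi_{p}/(4\max(\tau_{p},\tau_{d})C_{2}))^{2}$, both of which coincide with~\eqref{eqv:C_lambda_Predict_AIT_QP} and~\eqref{eqv:mu_mumax_X_qp}. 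Hence Theorem~\ref{thm:bound_active_set_qp} produces an optimal solution $\optSol$ with $\Actvqp(x^{*}) \subseteq \pacsqp(x)$, and this $\optSol$ is in fact the one delivered by Lemma~\ref{lem-upper_bound_norm_x_xstar_s_sstar}, so we still have quantitative control of $\|x - x^{*}\|$.

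For the reverse inclusion I would argue by contrapositive. Suppose $i \in \Iactvqp(x^{*})$; by the definition~\eqref{eqv:psi_p_qp} of $\psi_{p}$ as an infimum over the entire primal optimal set, $x_{i}^{*} \geq \min_{j \in \Iactvqp(x^{*})} x_{j}^{*} \geq \psi_{p}$. The goal is then to show $x_{i} \geq C$, which forces $i \notin \pacsqp(x)$, giving $\pacsqp(x) \subseteq \Actvqp(x^{*})$.

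The estimate on $x_{i}$ comes from Lemma~\ref{lem-upper_bound_norm_x_xstar_s_sstar}. Because $\mup < 1$ and $\|\lambda\| < 1$ by~\eqref{eqv:C_lambda_Predict_AIT_QP}--\eqref{eqv:mu_mumax_X_qp}, the two $\max$ factors in~\eqref{eqv:bounds_x_xstar_on_mu_lambda_qp} collapse and we obtain
\[
\|x - x^{*}\| < \tau_{p}\bigl( C_{2}\sqrt{\mup} + 4\|\lambda\| \bigr).
\]
Plugging in $\sqrt{\mup} < \psi_{p}/(4\max(\tau_{p},\tau_{d})C_{2})$ from~\eqref{eqv:mu_mumax_X_qp} and $\|\lambda\| < \psi_{p}/(16\max(\tau_{p},\tau_{d}))$ from~\eqref{eqv:C_lambda_Predict_AIT_QP} bounds each of the two summands strictly by $\psi_{p}/4$, so $\|x - x^{*}\| < \psi_{p}/2 = C$. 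Combined with $x_{i}^{*} \geq \psi_{p}$ this yields
\[
x_{i} \geq x_{i}^{*} - \|x - x^{*}\| > \psi_{p} - \tfrac{\psi_{p}}{2} = C,
\]
which contradicts $i \in \pacsqp(x)$ as desired.

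The argument is essentially bookkeeping on the constants; the only conceptual point is the use of $\psi_{p}$ as a uniform lower bound on inactive components of any primal optimum, which is exactly what allows the conclusion to hold for the particular $x^{*}$ selected by the error bound. There is no real obstacle beyond verifying that the tightened hypotheses~\eqref{eqv:C_lambda_Predict_AIT_QP}--\eqref{eqv:mu_mumax_X_qp} are precisely calibrated to make $\|x - x^{*}\| < \psi_{p}/2$, which they are by construction.
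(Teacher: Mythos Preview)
Your proposal is correct and follows essentially the same approach as the paper: invoke Theorem~\ref{thm:bound_active_set_qp} with $C=\psi_p/2$ for the inclusion $\Actvqp(x^{*})\subseteq\pacsqp(x)$, then use the error bound from Lemma~\ref{lem-upper_bound_norm_x_xstar_s_sstar} together with $x_i^{*}\ge\psi_p$ for $i\in\Iactvqp(x^{*})$ to obtain $x_i>\psi_p-\psi_p/2=C$ and hence $\pacsqp(x)\subseteq\Actvqp(x^{*})$. Your explicit remark that the same $\optSol$ produced by Lemma~\ref{lem-upper_bound_norm_x_xstar_s_sstar} is the one used throughout is a point the paper leaves implicit.
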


\begin{proof}
Setting $C = \frac{\psi_{p}}{2}$ in Theorem~\ref{thm:bound_active_set_qp}, we have~\eqref{eqv:predictedActv_included_in_actuPartition}. It remains to prove $\pacsqp(x) \subseteq \Actvqp(x^{*})$. If $i \notin \Actvqp(x^{*})$, $i \in \Iactvqp(x^{*})$ and we have $x^{*}_{i} > 0$. Then from~\eqref{eqv:psi_p_qp},~\eqref{eqv:C_lambda_Predict_AIT_QP} and~\eqref{eqv:mu_mumax_X_qp}, 
$
x_{i} \geq x^{*}_{i} -  \tau_p C_{2} \sqrt{\mup} - 4\tau_p\|\lambda\| > \psi_{p} - \frac{\psi_{p}}{2} =C,
$ 
namely $i \notin \pacsqp(x)$. Thus $\pacsqp(x) \subseteq \Actvqp(x^{*})$.
\end{proof}

\begin{theorem}
\label{thm:predict_strictly_active_set_qp}
Let
\begin{equation}
\label{eqv:psi_d_qp}
	\psi_{d} = \inf_{(y^{*},s^{*}) \in \ssdoqp}\min_{i \in \Sactvqp(s^{*})} s_{i}^{*},
\end{equation}
where $\ssdoqp$ is the solution set of the primal problem in~\eqref{eqv:orgQP}, and $\Sactvqp(s^{*})$ is defined in~\eqref{eqv:optActv_qp_x_s}.
Assume $\psi_{d} > 0$. Fix $C$ and $\lambda$ such that 
\begin{equation}
\label{eqv:C_lambda_Predict_SACT_QP}
C = \frac{\psi_{d}}{2} \quad 
\text{and} \quad
0 < \|\lambda\| < \min\left( 1,  \frac{\psi_{d}}{16\max(\tau_{p},\tau_d)} \right).
\end{equation}
Let $ (x,y,s) \in  \Nsymp$ with $\mup$ sufficiently small, namely 
\begin{equation}
\label{eqv:mu_mumax_S_qp}
\mup < \min\left( 1,\,\left( \frac{\psi_{d}}{4 \max(\tau_{p},\tau_d) C_{2}} \right)^{2} \right),
\end{equation}
where $\tau_{p}$ and $\tau_{d}$ are problem-dependent constants in~\eqref{eqv:bounds_x_xstar_on_mu_lambda_qp}, $\Nsymp$ is defined in~\eqref{eqv:symmetric_neighbourhood}, $\mup$ in~\eqref{eqv:mup} and $C_{2}$ in~\eqref{eqv:C4_qp}.
Then there exists an optimal solution $\optSol$ of~\eqref{eqv:orgQP}, such that 
\[
	\psasqp(s) = \Sactvqp(s^{*})
\]
where $\psasqp(s)$ is defined in~\eqref{eqv:predictedSets} and $\Sactvqp(s^{*})$ in~\eqref{eqv:optActv_qp_x_s}.
\end{theorem}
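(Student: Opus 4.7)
The plan is to mirror exactly the strategy used in the proof of Theorem~\ref{thm:predict_active_set_qp}, swapping the roles of the primal and dual and replacing $\psi_p$ by $\psi_d$. First I would invoke Theorem~\ref{thm:bound_active_set_qp} with the threshold $C = \psi_d/2$; the hypotheses~\eqref{eqv:C_lambda_Predict_SACT_QP} and~\eqref{eqv:mu_mumax_S_qp} are precisely tuned so that~\eqref{eqv:thm_bound_lambda_included_qp} and~\eqref{eqv:mu_bounds_included_qp} are satisfied for this value of $C$. This immediately yields an optimal solution $\optSol$ of~\eqref{eqv:orgQP} for which the sandwich
\[
\psasqp(s) \subseteq \Sactvqp(s^*) \subseteq \Actvqp(x^*) \subseteq \pacsqp(x)
\]
holds, and in particular the inclusion $\psasqp(s) \subseteq \Sactvqp(s^*)$ comes for free.

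The remaining task is the reverse inclusion $\Sactvqp(s^*) \subseteq \psasqp(s)$. Let $i \in \Sactvqp(s^*)$; by the definition~\eqref{eqv:psi_d_qp} of $\psi_d$ as an infimum taken over the entire dual solution set, we have $s_i^* \geq \psi_d$. I would then control the deviation $|s_i - s_i^*|$ via Lemma~\ref{lem-upper_bound_norm_x_xstar_s_sstar}. Since~\eqref{eqv:C_lambda_Predict_SACT_QP} forces $\|\lambda\| < 1$ and~\eqref{eqv:mu_mumax_S_qp} forces $\mup < 1$, both $\max$ terms in~\eqref{eqv:bounds_x_xstar_on_mu_lambda_qp} collapse to $1$, so the lemma gives
\[
\|s - s^*\| < \tau_d\, C_2\sqrt{\mup} + 4\tau_d\|\lambda\|.
\]

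Next I would plug the explicit bounds from~\eqref{eqv:C_lambda_Predict_SACT_QP} and~\eqref{eqv:mu_mumax_S_qp} into each summand: the first is bounded by $\tau_d C_2 \cdot \psi_d/(4\max(\tau_p,\tau_d)C_2) \leq \psi_d/4$, and the second by $4\tau_d \cdot \psi_d/(16\max(\tau_p,\tau_d)) \leq \psi_d/4$. Together these give $\|s - s^*\| < \psi_d/2$, so
\[
s_i \geq s_i^* - \|s - s^*\| > \psi_d - \tfrac{\psi_d}{2} = \tfrac{\psi_d}{2} = C,
\]
which by~\eqref{eqv:predictedSets} means $i \in \psasqp(s)$. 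Combined with the first inclusion this proves $\psasqp(s) = \Sactvqp(s^*)$.

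The only subtle point — and really the only place one could slip — is ensuring consistency in the choice of the optimal solution $\optSol$: Lemma~\ref{lem-upper_bound_norm_x_xstar_s_sstar} supplies some specific solution approximating $\pntp$, and I need $s_i^* \geq \psi_d$ for the indices $i$ in the dual inactive set of that particular solution. This is immediate from the definition of $\psi_d$ as an infimum over all dual optima taken before the min over $\Sactvqp(s^*)$, so there is no genuine obstacle; the argument is essentially a calibration of constants, and the work is already done in setting up~\eqref{eqv:C_lambda_Predict_SACT_QP} and~\eqref{eqv:mu_mumax_S_qp} to exactly parallel~\eqref{eqv:C_lambda_Predict_AIT_QP} and~\eqref{eqv:mu_mumax_X_qp}.
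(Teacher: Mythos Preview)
Your proposal is correct and follows exactly the paper's own proof: invoke Theorem~\ref{thm:bound_active_set_qp} with $C=\psi_d/2$ to obtain $\psasqp(s)\subseteq\Sactvqp(s^*)$, then for $i\in\Sactvqp(s^*)$ use $s_i^*\geq\psi_d$ together with the bound $\|s-s^*\|<\tau_d C_2\sqrt{\mup}+4\tau_d\|\lambda\|<\psi_d/2$ to conclude $s_i>C$. The paper presents this in a single line without spelling out the intermediate constant calibration, but the argument is identical.
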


\begin{proof}
Setting $C = \frac{\psi_{d}}{2}$ in Theorem~\ref{thm:bound_active_set_qp}, we have~\eqref{eqv:predictedActv_included_in_actuPartition}. It remains to prove that $\Sactvqp(s^{*}) \subseteq \psasqp(s)$. If $i \in \Sactvqp(s^{*})$, we have $s^{*}_{i} > 0$. Then from~\eqref{eqv:psi_d_qp},~\eqref{eqv:C_lambda_Predict_SACT_QP} and~\eqref{eqv:mu_mumax_S_qp}, 
$
s _{i} \geq s^{*}_{i} -  \tau_d C_{2} \sqrt{\mu} - 4\tau_d\|\lambda\| > \psi_{d} - \frac{\psi_{d}}{2} =C,
$ 
namely $i \in \psasqp(s)$. Thus $\Sactvqp(s^{*}) \subseteq \psasqp(s)$.
\end{proof}

\paragraph{Remarks on Theorems~\ref{thm:bound_active_set_qp}--\ref{thm:predict_strictly_active_set_qp}}
\begin{itemize}
\item The results for {\lp} (\cite[Theorems 4.4 -- 4.6]{CartisYan2014}) only require the primal-dual pair $\pntp$ to be in the strictly feasible set of the perturbed problem, but we need to restrict $\pntp$ to the symmetric neighbourhood defined in~\eqref{eqv:symmetric_neighbourhood} for the {\qp} case. This is a more restrictive condition  but essential to the proof of Lemma~\ref{lem-upper_bound_norm_x_xstar_s_sstar}.  The presence of $\sqrt{\mup}$ in~\eqref{eqv:bounds_x_xstar_on_mu_lambda_qp} leads to a squared term in the thresholds~\eqref{eqv:mu_bounds_included_qp},~\eqref{eqv:mu_mumax_X_qp} and~\eqref{eqv:mu_mumax_S_qp} for $\mup$, which implies that, comparing with the results for {\lp}, we may need to decrease $\mup$ further before we can predict the  optimal active set of a {\qp} problem.
\item Theorems~\ref{thm:bound_active_set_qp} shows that the predicted strongly active set is included in the active set and the active set is a subset of the predicted active set. The intersection of these two predictions can serve as an approximation of the  optimal active set, which is what we do in the implementation. Theorems~\ref{thm:predict_active_set_qp} and~\ref{thm:predict_strictly_active_set_qp} show that under certain conditions on the perturbations and duality gap, we could predict exactly the optimal active and strongly active sets at some optimal solution $\optSol$ of~\eqref{eqv:orgQP}. 
Similarly to the {\lp} case, the same quantities $\psi_p$ and $\psi_d$ are present in the theorems. When~\eqref{eqv:orgQP} has a unique primal (dual) solution,  $\psi_p > 0$ ($\psi_d > 0$).
But $\psi_p$ and $\psi_d$ are only  theoretical  constants and our implementation does not depend on their values.
\end{itemize}

\subsection{Predicting the original optimal tripartition}
\label{subsec-predict_tripartition}
Let 
\begin{equation}
\label{eqv:preidcted_iactv_zerosxs_qp}
\begin{array}{rcl}
	\piacsqp(x) & = & \{ i \in \{1,\ldots,n\} \,|\, x_{i} \geq C\}, \\
	\pzeroxs(x,s) & = & \{1,\ldots,n \} \setminus (\psasqp (s) \cup \piacsqp (x)),
\end{array}
\end{equation}
where $C$ is some constant threshold and $\psasqp(s)$ defined in~\eqref{eqv:predictedSets}. We consider $\left(\psasqp(s), \piacsqp(x), \pzeroxs(x,s)\right)$ as the prediction
of the optimal tripartition of~\eqref{eqv:orgQP} at the primal-dual pair $\pntp$. Note that $\left(\psasqp(s), \piacsqp(x), \pzeroxs(x,s)\right)$ may not be a tripartition for an arbitrary point as the complementary condition~\eqref{eqv:kkt_perturbedQP-nonneg} may not be satisfied and thus $\psasqp(s) \cap \piacsqp(x)$ could be nonempty. The following two theorems, Theorems~\ref{col-bound_tripartition_set_qp} and~\ref{col-predict_tripartition_qp}, show that, under certain conditions on $\mup$ and $\lambda$, we are able to predict part or the whole of the tripartition.
\begin{theorem}
\label{col-bound_tripartition_set_qp}
Let $C > 0$ and fix the perturbation $\lambda$ such that $\|\lambda\|$ satisfies~\eqref{eqv:thm_bound_lambda_included_qp}. 
Let $\pntp \in \Nsymp$ with $\mup$ sufficiently small, namely, $\mup$ satisfies~\eqref{eqv:mu_bounds_included_qp}.
Then there exists an optimal solution $\optSol$ of~\eqref{eqv:orgQP} such that
\begin{equation}
\label{eqv:predictedPartition_included_in_actuPartition}
	\piacsqp(x) \subseteq \Iactvqp(x^{*}),
	\quad
	\psasqp(s) \subseteq \Sactvqp(s^{*}),
	\quad\text{and}\quad
	\T(x^{*},s^{*}) \subseteq \pzeroxs(x,s),
\end{equation}
where $\Iactvqp(x^{*})$ and $\Sactvqp(s^{*})$ are defined in~\eqref{eqv:optActv_qp_x_s},  $\T(x^{*},s^{*})$ in~\eqref{eqv:tripttn_qp_T}, $\piacsqp(x)$ and $\pzeroxs(x,s)$ in~\eqref{eqv:preidcted_iactv_zerosxs_qp}, and  $\psasqp(s)$ in~\eqref{eqv:predictedSets}.
\end{theorem}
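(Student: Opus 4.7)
The plan is to derive this theorem as a direct consequence of Theorem~\ref{thm:bound_active_set_qp} by taking complements of the inclusion chain established there. The hypotheses on $C$, $\lambda$, and $\mup$ in Theorem~\ref{col-bound_tripartition_set_qp} are \emph{identical} to those of Theorem~\ref{thm:bound_active_set_qp}, so I can invoke it verbatim and obtain an optimal solution $\optSol$ of~\eqref{eqv:orgQP} satisfying
\[
\psasqp(s) \subseteq \Sactvqp(s^{*}) \subseteq \Actvqp(x^{*}) \subseteq \pacsqp(x).
\]

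First I would read off the second inclusion in~\eqref{eqv:predictedPartition_included_in_actuPartition} directly from the leftmost inclusion above: $\psasqp(s) \subseteq \Sactvqp(s^{*})$ is literally the first link in the chain. Next, for the first inclusion of~\eqref{eqv:predictedPartition_included_in_actuPartition}, I would use the fact that, by definition~\eqref{eqv:predictedSets} and~\eqref{eqv:preidcted_iactv_zerosxs_qp}, $\piacsqp(x)$ is the complement of $\pacsqp(x)$ in $\{1,\ldots,n\}$, and that $\Iactvqp(x^{*})$ is likewise the complement of $\Actvqp(x^{*})$ by~\eqref{eqv:optActv_qp_x_s}. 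Taking complements of $\Actvqp(x^{*}) \subseteq \pacsqp(x)$ reverses the inclusion, yielding $\piacsqp(x) \subseteq \Iactvqp(x^{*})$.

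For the third inclusion, I would combine the two inclusions just established, $\psasqp(s) \subseteq \Sactvqp(s^{*})$ and $\piacsqp(x) \subseteq \Iactvqp(x^{*})$, to obtain $\psasqp(s) \cup \piacsqp(x) \subseteq \Sactvqp(s^{*}) \cup \Iactvqp(x^{*})$; then taking complements in $\{1,\ldots,n\}$ and recalling the definitions $\T(x^{*},s^{*}) = \{1,\ldots,n\} \setminus (\Sactvqp(s^{*}) \cup \Iactvqp(x^{*}))$ from~\eqref{eqv:tripttn_qp_T} and $\pzeroxs(x,s) = \{1,\ldots,n\} \setminus (\psasqp(s) \cup \piacsqp(x))$ from~\eqref{eqv:preidcted_iactv_zerosxs_qp} gives $\T(x^{*},s^{*}) \subseteq \pzeroxs(x,s)$.

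There is essentially no obstacle here: the theorem is a corollary of Theorem~\ref{thm:bound_active_set_qp} obtained through elementary set complementation, which is why it does not require any new estimates on $\|x-x^{*}\|$ or $\|s-s^{*}\|$. The only mild subtlety worth flagging in the write-up is that, although $(\psasqp(s), \piacsqp(x), \pzeroxs(x,s))$ need not form a genuine tripartition of $\{1,\ldots,n\}$ at an arbitrary iterate (since $\psasqp(s)$ and $\piacsqp(x)$ may overlap absent exact complementarity), the set inclusions in~\eqref{eqv:predictedPartition_included_in_actuPartition} nevertheless hold and are the right objects for sandwiching the genuine optimal tripartition of~\eqref{eqv:orgQP}.
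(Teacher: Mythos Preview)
Your proposal is correct and follows essentially the same approach as the paper's own proof: invoke Theorem~\ref{thm:bound_active_set_qp}, read off $\psasqp(s)\subseteq\Sactvqp(s^{*})$, complement $\Actvqp(x^{*})\subseteq\pacsqp(x)$ to obtain $\piacsqp(x)\subseteq\Iactvqp(x^{*})$, and then derive $\T(x^{*},s^{*})\subseteq\pzeroxs(x,s)$ from the definitions~\eqref{eqv:tripttn_qp_T} and~\eqref{eqv:preidcted_iactv_zerosxs_qp}. The only difference is cosmetic: you spell out the union-then-complement step for the third inclusion, whereas the paper simply says it ``follows directly'' from the definitions.
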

\begin{proof}
Theorem~\ref{thm:bound_active_set_qp} shows that $\psasqp(s) \subseteq \Sactvqp(s^{*})$.
From~\eqref{eqv:predictedActv_included_in_actuPartition}, we have $ \Actvqp(x^{*}) \subseteq \pacsqp(x)$.
This, $\piacsqp(x) = \{1,\ldots,n\} \setminus \pacsqp(x)$, and $\Iactvqp(x^{*}) = \{1,\ldots,n\} \setminus \Actvqp(x^{*})$, give us that $\piacsqp(x) \subseteq \Iactvqp(x^{*})$.
$\T(x^{*},s^{*}) \subseteq \pzeroxs(x,s)$ follows directly from~\eqref{eqv:tripttn_qp_T} and~\eqref{eqv:preidcted_iactv_zerosxs_qp}.
\end{proof}

\begin{theorem}
\label{col-predict_tripartition_qp}
Let
\begin{equation}
\label{eqv:psi_qp}
	\psi = \min\left( \inf_{x^{*} \in \sspoqp}\min_{i \in \Iactvqp(x^{*})} x_{i}^{*},\, \inf_{(y^{*},s^{*}) \in \ssdoqp}\min_{i \in \Sactvqp(s^{*})}  s_{i}^{*}       \right),
\end{equation}
where $\sspoqp$ is the solution set of the primal problem in~\eqref{eqv:orgQP},  $\ssdoqp$ is the solution set of the dual problem and $\Iactvqp(s^{*})$ and $\Sactvqp(s^{*})$ are defined in~\eqref{eqv:optActv_qp_x_s}.
Assume $\psi > 0$. Fix $C$ and $\lambda$ such that 
\begin{equation}
\label{eqv:C_lambda_Predict_Trip_QP}
C = \frac{\psi}{2} \quad 
\text{and} \quad
0 < \|\lambda\| < \min\left( 1,  \frac{\psi}{16\max(\tau_{p},\tau_d)} \right).
\end{equation}
Let $ (x,y,s) \in  \Nsymp$ with $\mup$ sufficiently small, namely 
\begin{equation}
\label{eqv:mu_maX_qp_trip}
0< \mup < \min\left( 1,\,\left( \frac{\psi}{4 \max(\tau_{p},\tau_d) C_{2}} \right)^{2} \right),
\end{equation}
where $\tau_{p}$ and $\tau_{d}$ are problem-dependent constants in~\eqref{eqv:bounds_x_xstar_on_mu_lambda_qp}, $\Nsymp$ is defined in~\eqref{eqv:symmetric_neighbourhood}, $\mup$ in~\eqref{eqv:mup} and $C_{2}$ in~\eqref{eqv:C4_qp}.
Then there exists an optimal solution $\optSol$ of~\eqref{eqv:orgQP}, such that 
\[
	\psasqp(s) = \Sactvqp(s^{*}), \quad \piacsqp(x) = \Iactvqp(x^{*})  \quad \text{and} \quad \pzeroxs(x,s) = \T(x^{*},s^{*}),
\]
where $\T(x^{*},s^{*})$ is defined in~\eqref{eqv:tripttn_qp_T}, $\psasqp(s)$ in~\eqref{eqv:predictedSets}, and $ \piacsqp(x)$ and $\pzeroxs(x,s) $ defined in~\eqref{eqv:preidcted_iactv_zerosxs_qp}.
\end{theorem}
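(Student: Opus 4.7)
The plan is to reduce this theorem directly to the conjunction of Theorems~\ref{thm:predict_active_set_qp} and~\ref{thm:predict_strictly_active_set_qp}. By the definition of $\psi$ in~\eqref{eqv:psi_qp} compared with $\psi_p$ in~\eqref{eqv:psi_p_qp} and $\psi_d$ in~\eqref{eqv:psi_d_qp}, we have $\psi = \min(\psi_p, \psi_d)$, so the hypothesis $\psi > 0$ yields both $\psi_p > 0$ and $\psi_d > 0$. Moreover, since $\psi \leq \psi_p$ and $\psi \leq \psi_d$, the bounds~\eqref{eqv:C_lambda_Predict_Trip_QP} and~\eqref{eqv:mu_maX_qp_trip} on $C$, $\|\lambda\|$, and $\mup$ are at least as tight as those required by~\eqref{eqv:C_lambda_Predict_AIT_QP}--\eqref{eqv:mu_mumax_X_qp} and by~\eqref{eqv:C_lambda_Predict_SACT_QP}--\eqref{eqv:mu_mumax_S_qp}, so the hypotheses of both earlier theorems are satisfied simultaneously for the single threshold $C = \psi/2$.

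First, I would invoke Lemma~\ref{lem-upper_bound_norm_x_xstar_s_sstar} once on the given $\pntp \in \Nsymp$ to obtain a \emph{single} optimal solution $\optSol$ of~\eqref{eqv:orgQP} for which both bounds in~\eqref{eqv:bounds_x_xstar_on_mu_lambda_qp} hold at once. Using this common $\optSol$, the primal argument from the proof of Theorem~\ref{thm:predict_active_set_qp} gives $\pacsqp(x) = \Actvqp(x^{*})$, and the dual argument from the proof of Theorem~\ref{thm:predict_strictly_active_set_qp} gives $\psasqp(s) = \Sactvqp(s^{*})$; both identifications therefore refer to the same solution and to a single tripartition of $\{1,\ldots,n\}$.

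The two remaining equalities are then purely set-theoretic. Since $\piacsqp(x) = \{1,\ldots,n\} \setminus \pacsqp(x)$ by~\eqref{eqv:predictedSets} and~\eqref{eqv:preidcted_iactv_zerosxs_qp}, and $\Iactvqp(x^{*}) = \{1,\ldots,n\} \setminus \Actvqp(x^{*})$ by~\eqref{eqv:optActv_qp_x_s}, the identity $\pacsqp(x) = \Actvqp(x^{*})$ immediately yields $\piacsqp(x) = \Iactvqp(x^{*})$. Similarly, $\pzeroxs(x,s) = \{1,\ldots,n\} \setminus (\psasqp(s) \cup \piacsqp(x))$ by~\eqref{eqv:preidcted_iactv_zerosxs_qp}, whereas $\T(x^{*},s^{*}) = \{1,\ldots,n\} \setminus (\Sactvqp(s^{*}) \cup \Iactvqp(x^{*}))$ by~\eqref{eqv:tripttn_qp_T}, so combining the two equalities just established gives $\pzeroxs(x,s) = \T(x^{*},s^{*})$.

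The only subtlety is the coupling of the primal and dual arguments through a common $\optSol$: if one simply cited Theorems~\ref{thm:predict_active_set_qp} and~\ref{thm:predict_strictly_active_set_qp} separately, the two theorems would in principle be free to produce different optimal solutions realising the respective identifications, which would not suffice to conclude a single consistent tripartition. Since Lemma~\ref{lem-upper_bound_norm_x_xstar_s_sstar} already delivers one $\optSol$ controlling both $\|x-x^{*}\|$ and $\|s-s^{*}\|$, this coupling is immediate, and I do not anticipate any substantive obstacle beyond invoking the lemma a single time at the start.
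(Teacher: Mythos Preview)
Your proposal is correct and follows essentially the same route as the paper's proof: both arguments invoke Lemma~\ref{lem-upper_bound_norm_x_xstar_s_sstar} once to fix a single $\optSol$, use the inclusions furnished by Theorem~\ref{thm:bound_active_set_qp} (the paper accesses these via Theorem~\ref{col-bound_tripartition_set_qp}, you via the proofs of Theorems~\ref{thm:predict_active_set_qp} and~\ref{thm:predict_strictly_active_set_qp}), and then establish the reverse inclusions by the same direct estimate $x_i \geq x_i^{*} - \tau_p C_2\sqrt{\mup} - 4\tau_p\|\lambda\| > \psi - \psi/2 = C$ (and its dual analogue). One phrasing to tighten: you cannot literally say the \emph{hypotheses} of Theorems~\ref{thm:predict_active_set_qp} and~\ref{thm:predict_strictly_active_set_qp} are satisfied, since those theorems fix $C = \psi_p/2$ and $C = \psi_d/2$ respectively, not $C = \psi/2$; what you actually do---re-running their \emph{arguments} with the smaller threshold---is exactly right, and your explicit attention to obtaining a single common $\optSol$ is a point the paper leaves implicit.
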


\begin{proof}
Setting $C = \frac{\psi}{2}$ in Theorem~\ref{col-bound_tripartition_set_qp}, we have~\eqref{eqv:predictedPartition_included_in_actuPartition}. It remains to prove that
$\Iactvqp(x^{*}) \subseteq \piacsqp(x)$ and $ \Sactvqp(s^{*}) \subseteq \psasqp(s)$.
From~\eqref{eqv:psi_qp},~\eqref{eqv:C_lambda_Predict_Trip_QP} and~\eqref{eqv:mu_maX_qp_trip}, if $i \in \Iactvqp(x^{*})$, we have $x^{*}_{i} > 0$ and then $x_{i} \geq x^{*}_{i} -  \tau_p C_{2} \sqrt{\mup} - 4\tau_p\|\lambda\| > \psi - \frac{\psi}{2} =C$, namely $i \in \piacsqp(x)$. 
Thus $\Iactvqp(x^{*}) \subseteq \piacsqp(x)$.
Similarly, we can also have $ \Sactvqp(s^{*}) \subseteq \psasqp(s)$. 
Therefore $\pzeroxs(x,s)  = \T(x^{*},s^{*})$.
\end{proof}

\section{Numerical experiments for quadratic programming using perturbations}
\label{sec:numericsQP}
\subsection{The perturbed algorithm and its implementation}
\label{sec:perturbedAlg}
All numerical experiments in this section employ an infeasible primal-dual path-following {\ipm} applied to~\eqref{eqv:perQP} or~\eqref{eqv:orgQP}. 
The perturbed algorithm is summarised in \textbf{Algorithm~\ref{alg:perAlgQP}} and it is nothing but an infeasible {\ipm} applied to~\eqref{eqv:perQP} with possible shrinkage of the perturbations.

\begin{algorithm}[h]                     
\caption{The Perturbed Algorithm with Active-set Prediction for {\qp}}          
\label{alg:perAlgQP}    
{\normalsize                      
\begin{algorithmic}                    
	\STATE{%
	\textbf{Step 0:} choose perturbations $(\lambda^{0}, \phi^{0})>0$ and calculate  a Mehrotra starting point $(x^0,y^0,s^0)$;}%
    	\FOR{$k = 0,1,2,\ldots$} 
    		\STATE{%
		\textbf{Step 1:} solve the perturbed Newton system~\eqref{eqv:perturbedCentralPathQP} using the augmented system approach, namely
		\begin{align*}
	\begin{bmatrix}
	-H-D_{\lambda}^{-2} & A^{T} \\
	A         & 0       
	\end{bmatrix}
	\begin{bmatrix}
	 \Delta x^{k} \\
	 \Delta y^{k} 
	\end{bmatrix}
	  & = -
	 \begin{bmatrix}
	 R^{k}_{d} - (X^{k} + \Lambda^{k} )^{-1} R^{k}_{\mup}\\
	 R^{k}_{p}
	 \end{bmatrix}, \\
	 \Delta s^{k} & = - \left( X^{k} + \Lambda^{k} \right)^{-1} \left(  R^{k}_{\mup}  + \left(  S^{k} + \Phi^{k} \right) \Delta x^{k} \right),
	\end{align*}
	where $D_{\lambda} = \left(  S^{k} + \Phi^{k} \right)^{-\frac{1}{2}} \left( X^{k} + \Lambda^{k}  \right)^\frac{1}{2}$, $R^{k}_{p} = Ax^{k} - b$, $R^{k}_{d} = A^{T}y^{k} +s^{k} - Hx^{k}- c$, $R^{k}_{\mup}  = \left( X^{k}+\Lambda^{k} \right) 	\left( S^{k} + \Phi^{k} \right)e -  \sigma^{k}\mup^{k}e $,
	and where $\sigma^{k}  = \min (0.1, 100\mup^{k} )\in [ 0 ,1 ]$
	and 
	\begin{equation}
	\label{eqv:mupk}
	\mup^{k} = \frac{(x^{k} + \lambda^{k})^{T}(s^{k} + \phi^{k})}{n};
	\end{equation}}%
	\STATE{%
	\textbf{Step 2:} choose a fixed, close to 1, fraction of the stepsize to the nearest constraint boundary in the primal and dual space, respectively.
	Namely, 
$
\alpha_{p}^{k}  =  \min \left(  \bar{\alpha}  \min_{ i \, : \Delta x^{k}_{i} < 0 } \left(  \frac{-x^{k}_{i} - \lambda^{k}_{i}}{  \Delta x^{k}_{i} }  \right),\,  1 \right),
$  
and 
$
\alpha_{d}^{k}  =  \min \left(  \bar{\alpha}  \min_{ i \, : \Delta s^{k}_{i} < 0 } \left(  \frac{-s^{k}_{i} - \phi^{k}_{i}}{  \Delta s^{k}_{i} }  \right),\, 1 \right),
$
	where $\bar{\alpha} = 0.9995$; }%
	\STATE{%
	\textbf{Step 3:} update $ x^{k+1} = x^{k} + \alpha^{k}_{p} \,\Delta x^{k}$ and $(y^{k+1},s^{k+1}) = (y^{k} ,s^{k}) + \alpha^{k}_{d} \,( \Delta y^{k}, \Delta s^{k})$;
	}%
	\STATE{%
	\textbf{Step 4:} predict the  optimal active set of~\eqref{eqv:orgQP} and denote as $\Actv^{k}$;
	}%
	\STATE{%
	\textbf{Step 5:} terminate if some termination criterion is satisfied;
	}%
	\STATE{%
	\textbf{Step 6:} obtain $(\lambda^{k+1}, \phi^{k+1})$ possibly by shrinking $(\lambda^{k}, \phi^{k})$ so that $(x^{k+1} +  \lambda^{k+1}, s^{k+1} + \phi^{k+1}) >0$.
	}%
	\ENDFOR
\end{algorithmic} }
\end{algorithm}

\paragraph{Algorithm without perturbations for {\qp}}
For comparison in the numerical tests, we refer to the algorithm with no perturbations (Algorithm~\ref{alg:perAlgQP} with $\lambda = \phi = 0$) as \textbf{{\refstepcounter{algorithm}\label{alg:unperAlgQP}}Algorithm~{\thealgorithm}}.
We use the notation $\mu^{k}$, which is equivalent to $\mup^{k}$~\eqref{eqv:mupk} with $\lambda^{k} = \phi^{k} = 0$ for the duality gap for Algorithm~\ref{alg:unperAlgQP}. 
  
Most of the implementation details follow similarly to the {\lp} case unless specified. 
We apply the Mehrotra \textbf{starting point}~\cite{mehrotra} for both perturbed (Algorithm~\ref{alg:perAlgQP}) and unperturbed (Algorithm~\ref{alg:unperAlgQP}) algorithms.\footnote{Note that we modify Mehrotra's procedure and calculate a min-norm primal-dual feasible point for~\eqref{eqv:orgQP}, namely we replace $\tilde{s} = c-A^{T}\tilde{y}$ in \cite[(7.1)]{mehrotra} with $\tilde{s} = c-A^{T}\tilde{y} +Q\tilde{x}$. } 
We \textbf{shrink perturbations} according to the value of the smallest elements of the current iterate, for instance, at iteration $k$, we choose a fixed fraction of $\lambda^{k}$ when $\min(x^{k}) > 0$, otherwise we find a point on the line segment connecting $\lambda^{k}$ and $ - \min(x^{k})e$; similarly for $\phi^{k}$.  
The \textbf{initial perturbations} are set to $\lambda^{0} = \phi^{0}  = 10^{-3}e$ for all numerical tests. 
We utilise the same \textbf{active-set prediction} procedure proposed in \cite[Section 6.1]{CartisYan2014}, namely, we move the indices between the predicted active, predicted inactive, and undetermined sets, depending on whether the criteria $x^{k}_{i} < C$ and $s^{k}_{i} > C$ are satisfied (see Procedure~\ref{alg:actvPrediction} in~\ref{appnd:actvprocedure} for details). 
\textbf{Termination criteria} will be defined for each set of tests. Relative residual is also employed in the following tests to measure the distance from the iterates to the optimal solution set of~\eqref{eqv:perQP}, namely 
\begin{equation}
\label{eqv:resResidualQP}
	\mbox{Res}^{k}_{\lambda} =  \frac{|| \left( Ax^{k} - b, A^{T}y^{k} + s^{k} - Hx^{k} - c, \left( X^{k}+\Lambda^{k} \right) \left( S^{k} + \Phi^{k} \right)e \right)  ||_{\infty}}{1+\max \left(  || b ||_{\infty}, || c ||_{\infty}  \right)}.
\end{equation}

\subsection{Test problems}
\label{sec:testproblems_qp}
\paragraph{Randomly generated problems (QTS1)}\label{test-random_set_qp} 
We first randomly generate the number of constraints $m \in (10,200) $, the number of variables $n \in (20,500) $ and the matrix $A$ following the same procedure described in \cite[Section 6.2]{CartisYan2014} for generating random {\lp} test problems.
Then randomly generate a full rank square matrix $B \in \Real^{n \times n}$ and set the quadratic term $H = B'B$. Next we generate a triple $(x,y,s) \in \Real^{n}\times\Real^{m}\times\Real^{n}$ with $(x,s) \geq 0$ and density about $0.5$. Finally we obtain $b = Ax$ and $c = A^{T}y+s-Hx$. Thus $(x,y,s)$ is used as a feasible point for this problem.
50 problems are generated for this test set.

\paragraph{Randomly generated degenerate problems (QTS2)}\label{test-random_pd_degen_set_qp}
First generate $m$, $n$, $A$ and $H$ as for {\tspndegqp}. Apart from generating a feasible point as we do for {\tspndegqp}, we generate a primal-dual degenerate optimal solution here. Namely we generate a triple $(x,y,s)$ with $(x,s) \geq 0$, $x_{i}s_{i} = 0$ for all $i \in \{1,\ldots,n\}$ and the number of positive components of $x$ strictly less than $m$ and that of $s$ strictly less than $n-m$. Then we get $b$ and $c$ as for {\tspndegqp}.
50 problems are also generated for this test set.

\paragraph{Convex {\qp} test problems from Netlib~\cite{netlib} and Maros and Meszaros' test sets~\cite{maros1999repository} (QTS3)}\label{test-netlib_mm_convex} We choose 7 small problems from the Netlib {\lp} test set
and add the identity matrix  as the quadratic term. We also choose 13 small problems from Maros and Meszaros' convex {\qp} collection\footnote{%
\url{www.doc.ic.ac.uk/~im/\#DATA}.}. All test problems have been transformed to the form with only equality constraints and nonnegative bounds on $x$ by adding slack variables. The dimensions of the problems are small, namely $m < 200$ and $n<250$ including slack variables. For the full list of the problems, see Table~\ref{tab:qpCuterProblems}. Note that the problems whose names  start with `QP\_'  are obtained from {\sc netlib}.

\begin{table}[H]
\centering
\caption{Convex {\qp} test problems from Netlib and Maros and Meszaros' test set}
\label{tab:qpCuterProblems}
\begin{small}
\begin{tabular}{| lll || lll |}
\hline
Name & m & n & Name & m & n \\
\hline\hline
QP\_ADLITTLE &   55 &  137 & QP\_AFIRO &   27 &   51 \\
  QP\_BLEND &   74 &  114 &     QP\_SC50A &   49 &   77     \\
  QP\_SC50B &   48 &   76 &      QP\_SCAGR7 &  129 &  185 \\ 
QP\_SHARE2B &   96 &  162 &   CVXQP1\_S &  150 &  200 \\  
  CVXQP2\_S &  125 &  200 &     CVXQP3\_S &  175 &  200 \\   
     DUAL1 &   86 &  170 &           DUAL2 &   97 &  192 \\
     DUAL3 &  112 &  222 &          DUAL4 &   76 &  150 \\      
     HS118 &   44 &   59 &            HS21 &    3 &    5 \\
      HS51 &    3 &   10 &          HS53 &    8 &   10 \\    
      HS76 &    3 &    7 &       ZECEVIC2 &    4 &    6 \\
      \hline\hline
\end{tabular}
\end{small}
\end{table}

\subsection{On the accuracy of optimal active-set predictions}
\label{sec:preidctionRatios_qp}
Assume $\Actvqp^{k}$ is the predicted active set  and $\Actvqp(x^{*})$ the actual active set at a primal optimal solution $x^{*}$ of~\eqref{eqv:orgQP}.
To measure the accuracy of our predictions, we also make use of the three prediction ratios defined in~\cite[Section 6.2.2]{CartisYan2014}. Namely, 
\begin{itemize}
\item False-prediction ratio    $\displaystyle = \frac{|\,\Actvqp^{k} \,\setminus\, (\Actvqp^{k} \,\cap\, \Actvqp(x^{*})) \,|}{|\, \Actvqp^{k} \,\cup\, \Actvqp(x^{*}) \,|}$,
\item Missed-prediction ratio $\displaystyle = \frac{|\,\Actvqp(x^{*}) \,\setminus\, (\Actvqp^{k} \,\cap\, \Actvqp(x^{*}))  \,|}{|\,  \Actvqp^{k} \,\cup\, \Actvqp(x^{*}))  \,|}$,
\item Correction ratio             $\displaystyle = \frac{|\, \Actvqp^{k} \,\cap\, \Actvqp(x^{*})   \,|}{|\, \Actvqp^{k} \,\cup\, \Actvqp(x^{*}) \,|}$.
\end{itemize}
False-prediction, missed-prediction  and correction ratios measure the degree of incorrectly identified active constraints, the degree of incorrectly rejected active constraints and the accuracy of the prediction, respectively. 
It is clear that all the three ratios are between 0 and 1 and the correction ratio is 1 if the predicted set is the same as the actual  optimal active set. The main task for this test is to compare the three measures for Algorithms~\ref{alg:perAlgQP} and~\ref{alg:unperAlgQP}.

To measure and compare the accuracy of the predicted active sets, we terminate Algorithms~\ref{alg:perAlgQP} and~\ref{alg:unperAlgQP} at the same iteration, and compare the predicted active sets with the original optimal active set at a solution obtained from the active-set method and that at a maximal complementary solution (the analytic center of the solution set) from an interior point method.\footnote{%
We solve the problem using Matlab's {\qp} solver \emph{quadprog} with the `Algorithm' option set to interior point or active set and consider all variables of the optimal solution $x^{*}$ less than $10^{-5}$ as active. %
} 
These two original optimal active sets can be different.\footnote{\label{foo-actv_difference}%
The difference is about $5\%$ on average for problems in {\tspndegqp} and $30\%$ for problems in {\tspddegqp}.%
}
Through this test, we also try to answer which active sets (at a solution from the active-set solver or a maximal complementary solution) Algorithm~\ref{alg:perAlgQP} predicts.  We test on two test cases, random problems (\tspndegqp) and random degenerate problems ({\tspddegqp}).
\begin{figure}[h]
\fboxsep= 3.5pt
\fboxrule=0.5pt
\begin{minipage}[t]{0.46\linewidth}
\centering\fbox{
\includegraphics[width=\textwidth]
{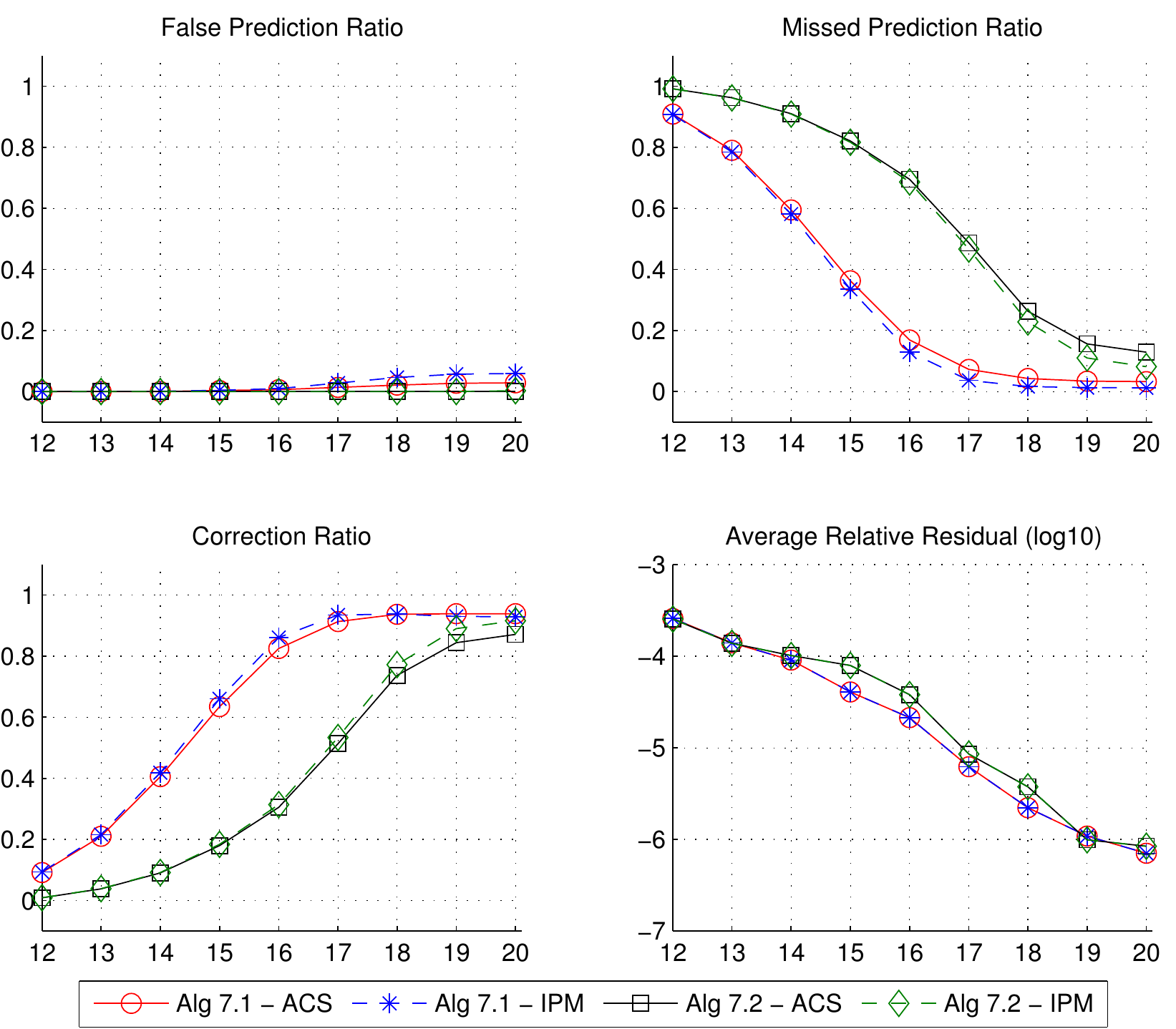} }
\caption{\small Prediction ratios for randomly generated {\qp} problems}
\label{fig:cr-nondegen_qp}
\end{minipage}
\hspace{0.4cm}
\begin{minipage}[t]{0.46\linewidth}
\centering\fbox{
\includegraphics[width=\textwidth]
{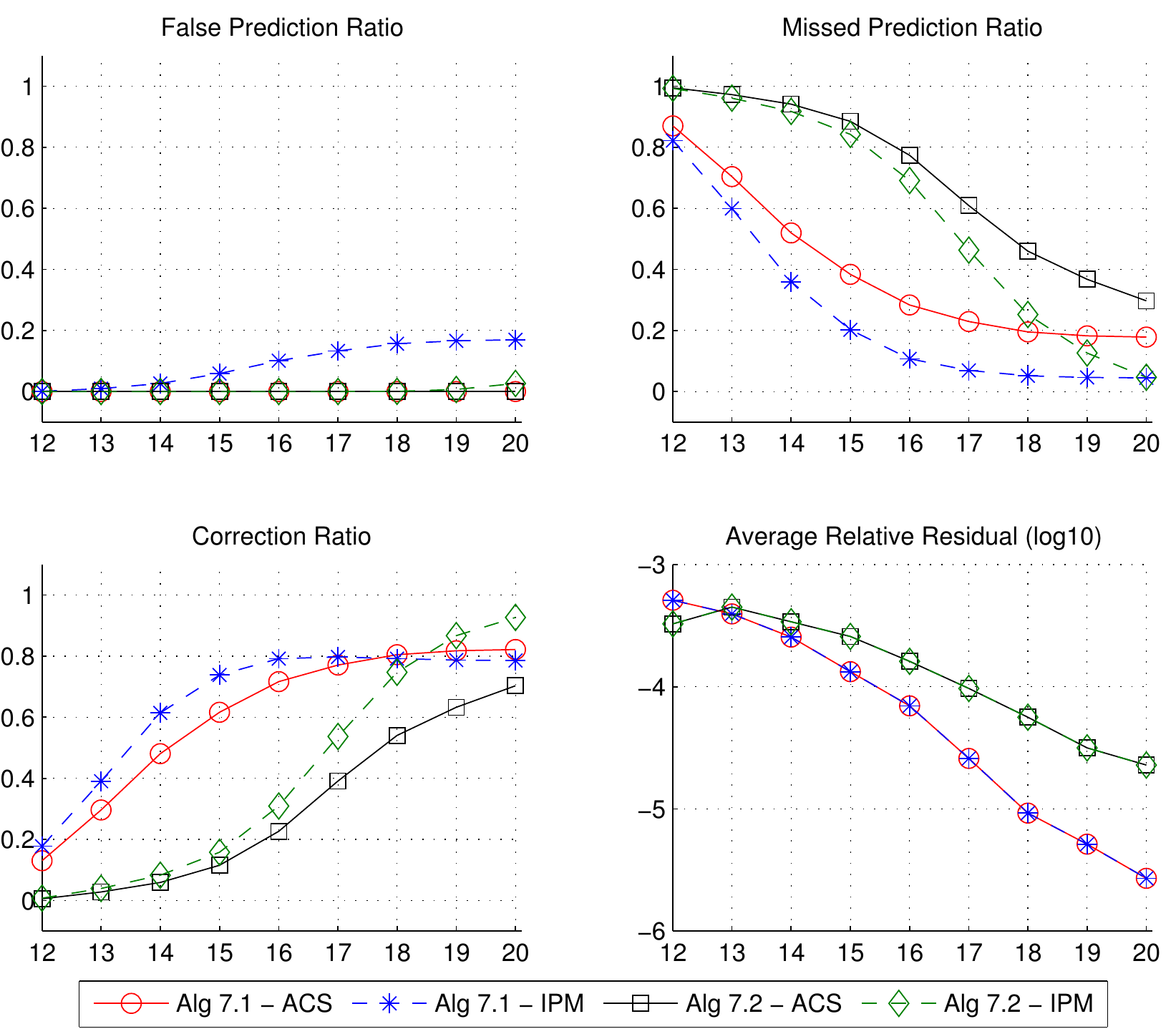} }
\caption{\small Prediction ratios for randomly generated primal-dual degenerate {\qp} problems}
\label{fig:cr-degen_qp}
\end{minipage}
\end{figure}

In Figures~\ref{fig:cr-nondegen_qp} and~\ref{fig:cr-degen_qp}, the x-axis gives the number of interior point iterations at which we terminate Algorithms~\ref{alg:perAlgQP} and~\ref{alg:unperAlgQP} and the y-aixs shows the average value of corresponding measures. The first three plots (from top to bottom, left to right) present the corresponding prediction ratios. In each plot, we compare the predicted active set from Algorithm~\ref{alg:perAlgQP} with that from the active-set solver (the red solid line with circle), Algorithm~\ref{alg:perAlgQP} with the interior point solver (the blue dashed line with star sign), Algorithm~\ref{alg:unperAlgQP} with the active-set solver (the black solid with square sign) and Algorithm~\ref{alg:unperAlgQP} with the interior point solver (green dashed line with diamond sign). The last figure shows the log10-scaled average relative residuals~\eqref{eqv:resResidualQP} of Algorithms~\ref{alg:perAlgQP} or~\ref{alg:unperAlgQP}.  

\begin{itemize}
\item Generally speaking, using perturbations yields earlier and better prediction of the original  optimal active set for both test cases, in terms of the correction ratios. 
Similar to the linear case, the correction ratios from the perturbed algorithms are over two times higher than that from the unperturbed ones  at some iterations, for test problems in both {\tspndegqp} and {\tspddegqp}.
\item The perturbed algorithm is more likely to predict the active set at an original optimal solution generated by the active-set solver. 
Although it is not obvious for  test problems in {\tspndegqp}, the difference is much clearer for the degenerate case {\tspddegqp}. In Figure~\ref{fig:cr-degen_qp}, the false-prediction ratio for Algorithm~\ref{alg:perAlgQP} and the interior point solver is about $17\%$ at the 20\textsuperscript{th} iteration but that for Algorithm~\ref{alg:perAlgQP} and the active-set solver stays close to 0. 
\item In Figure~\ref{fig:cr-degen_qp}, we can also observe that after the 18\textsuperscript{th} iteration, the average correction ratios comparing Algorithm~\ref{alg:unperAlgQP} with the {\ipm} solver are better than that comparing 
Algorithm~\ref{alg:perAlgQP} with active-set solver. This is because at the last few iterations the perturbations are not zero (on average about $\mathcal{O}(10^{-3})$)  and cannot shrink further;  so the iterates of Algorithm~\ref{alg:perAlgQP} cannot keep moving closer to the original optimal solution, which prevents Algorithm~\ref{alg:perAlgQP} from improving the correction ratios.
\item Ultimately, the correction ratio comparing Algorithm~\ref{alg:unperAlgQP} with the interior point solver should go to 1 but then it would need to solve the problems to high accuracy ($10^{-8}$). 
As our implementation is for proof of concept,  it can experience numerical issues when solving too far.
\item Another interesting phenomenon is that the relative residual of Algorithm~\ref{alg:perAlgQP} seems to decrease faster than that of Algorithm~\ref{alg:unperAlgQP}. It suggests that using perturbations may help stabilise the Newton system and thus generate better search directions, especially for the degenerate  problems in {\tspddegqp}.
\end{itemize}

\subsection{Solving the sub-problems}
\label{sec:solveSub_qp}
In this test, we first run Algorithm~\ref{alg:perAlgQP} and terminate it when $\mup^{k} < 10^{-3}$, record the number of interior point iterations, remove zero variables and corresponding columns and/or rows of $H$, $A$ and $c$ from the original problem~\eqref{eqv:orgQP}, and then solve the newly-formulated smaller-sized problem (sub-problem) using the active-set method. For comparison purposes we perform the same number of interior point iterations of Algorithm~\ref{alg:unperAlgQP}, predict the active set, formulate the sub-problem and solve it. We compare the number of active-set iterations used to solve the sub-problems from Algorithms~\ref{alg:perAlgQP} and~\ref{alg:unperAlgQP}. 

It is also essential to make sure the sub-problems that we generate are equivalent to their original problems. 
Assume $\Actvqp^{k}$ is the predicted active set when terminating the interior point process at iteration $k$, $x^{*}_{\rm sub}$ the optimal solution of the subproblems from the active-set solver and $x^{*}$ an optimal solution of the original problem. Let $ \Actvqp_{c}^{k} =  \{1,\ldots,n\} \setminus \Actvqp^{k}$ be the complement of $\Actvqp^{k}$.
We  consider the feasibility errors in the context of the original problem and the relative difference between the optimal objective values of the sub-problems and that of the original problems, namely,
\begin{equation}
\label{eqv:feaErrorTest}
\displaystyle \text{Feasibility error} = \frac{ \| A_{ \Actvqp_{c}^{k}} x^{*}_{\rm sub} - b\|_{\infty}}{1+\| b \|_{\infty}},
\end{equation}
and
\begin{equation}
\label{eqv:objErrorTest}
\displaystyle \text{Objective error} = \frac{ | c^{T}_{ \Actvqp_{c}^{k}} x^{*}_{\rm sub} + \frac{1}{2} (x^{*}_{\rm sub})^{T} H_{ \Actvqp_{c}^{k}} x^{*}_{\rm sub} - c^{T} x^{*}  -  \frac{1}{2} (x^*)^{T} H x^{*}\,|}{1+|\, c^{T} x^{*} + \frac{1}{2} (x^*)^{T} H x^{*}|},
\end{equation}
where $H_{ \Actvqp_{c}^{k}} = (H_{ij})_{i,j \in \Actvqp_{c}^{k}}$.
If the feasibility error is small, $\bar{x}^{*}$ with $\bar{x}^{*}_{\Actvqp^{k}} = 0$ and $\bar{x}^{*}_{\Actvqp_{c}^{k}} = x^{*}_{\rm sub}$ is a feasible point for the original {\qp}, and also optimal if the objective error is small as well.

\paragraph*{Randomly generated problems ({\tspndegqp} and {\tspddegqp})}
Table~\ref{tab-compare_actvset_iters} shows the average number of active-set iterations for the test problems in {\tspndegqp} and {\tspddegqp}. It is clear that using perturbations saves a lot of active-set iterations, about $63\%$ for problems in {\tspndegqp} and $36\%$ for {\tspddegqp}. Though unfortunately degeneracy seems to disadvantage the improvement, it cannot cover the fact that using perturbations would enhance the capabilities of predicting a better active set of the original problem, in the context of primal-dual path-following {\ipm}, and potentially reduce the computational effort for solving a problem.
\begin{table}[h]
\centering
\caption{Comparing the number of active-set iterations for Algorithms~\ref{alg:perAlgQP} and~\ref{alg:unperAlgQP}}
\label{tab-compare_actvset_iters}
{\small
\begin{tabular}{rcc|cc|}
 & \multicolumn{2}{c}{Random problems} & \multicolumn{2}{c}{Random degenerate problems}  \\
 \cline{2-5}
 \multicolumn{1}{r|}{}& Algorithm~\ref{alg:perAlgQP} & Algorithm~\ref{alg:unperAlgQP} & Algorithm~\ref{alg:perAlgQP} & Algorithm~\ref{alg:unperAlgQP} \\
\hline
\multicolumn{1}{r|}{Avg. \# of active-set iters}  &   46  &   143 &  190 &  300\\ 
\multicolumn{1}{r|}{Avg. $\mup^{k}$ and $\mu^{k}$ when terminate {\ipm}}  &    $5.8\times 10^{-04}$   &   $8.0\times 10^{-04}$ &  $6.3\times 10^{-04}$ &  $7.8 \times 10^{-04}$   \\
\hline
\hline
\end{tabular} }
\end{table}

We check the objective and feasibility errors in Table~\ref{tab-compare_actvset_rel_errors}.
All optimal solutions of the sub-problems generated from Algorithms~\ref{alg:perAlgQP} and~\ref{alg:unperAlgQP} are primal feasible for the original~\eqref{eqv:orgQP}.
For problems in {\tspndegqp}, Algorithm~\ref{alg:perAlgQP} yields small average objective error, in the order of $10^{-7}$.  For {\tspddegqp}, the average error from Algorithm~\ref{alg:unperAlgQP} is slightly higher, which is in the order of $10^{-6}$, but still acceptable, especially $90\%$  of the test problems in {\tspddegqp} have small relative errors, in the order of $10^{-16}$ (can be considered as zero in {\sc matlab}). This is, to some extend, even better than the result for the test case {\tspndegqp}.
\begin{table}[h]
\centering
\caption{Comparing the relative errors for Algorithms~\ref{alg:perAlgQP} and~\ref{alg:unperAlgQP}}
\label{tab-compare_actvset_rel_errors}
{\small
\begin{tabular}{rcc|cc|}
 & \multicolumn{2}{c}{Random problems} & \multicolumn{2}{c}{Random degenerate problems}  \\
 \cline{2-5}
 \multicolumn{1}{r|}{}& Algorithm~\ref{alg:perAlgQP} & Algorithm~\ref{alg:unperAlgQP} & Algorithm~\ref{alg:perAlgQP} & Algorithm~\ref{alg:unperAlgQP} \\
\hline
\multicolumn{1}{r|}{Avg. objective errors}  					& $2.0 \times 10^{-07}$ & $9.2\times 10^{-17}$& $6.4\times 10^{-06}$ &  $8.9 \times  10^{-17}$  \\ 
\multicolumn{1}{r|}{$90^{th}$ percentile of relative errors}	& $4.9 \times 10^{-07}$ & $3.3\times 10^{-16}$& $6.2\times 10^{-16}$ &	 $3.5 \times  10^{-16}$  \\
\multicolumn{1}{r|}{Avg. feasibility errors}  					& $ 5.4\times 10^{- 14}$ & $ 5.9\times 10^{-14 }$& $ 6.4 \times 10^{- 14}$ &  $ 8.2 \times  10^{- 14}$  \\ 
\hline\hline
\end{tabular} }
\end{table} 

\paragraph*{QP problems from the Netlib and Maros and Meszaros' test sets (\tsnqp)}
We also observe good numerical results for  a small set of {\qp} problems from Netlib and Maros and Meszaros'  convex {\qp} test set ({\tsnqp}). We summarise the results in Table~\ref{tab-solving_subprob+netlib_cute}.
For these problems, we save almost $50\%$ of active-set iterations and all optimal solutions of the sub-problems from Algorithm~\ref{alg:perAlgQP} are feasible and optimal for the original problems. For details, see Section~\ref{apd-netlib_cuter_sub}.
\begin{table}[h]
\caption{Numerical results for solving sub-problems for test case {\tsnqp}}
\label{tab-solving_subprob+netlib_cute}
\centering
{\small
\begin{tabular}{rcc}
 \multicolumn{1}{r}{}& Algorithm~\ref{alg:perAlgQP} & Algorithm~\ref{alg:unperAlgQP}  \\
\cline{2-3}
\multicolumn{1}{r|}{Avg. \# of active-set iters}  					& 6  & \multicolumn{1}{c|}{13}\\
\multicolumn{1}{r|}{Avg. $\mup^{k}$ and $\mu^{k}$ when terminate {\ipm}}  							&  $4.6\times 10^{-04}$  & \multicolumn{1}{c|}{$6.4\times 10^{-04}$     }\\
\multicolumn{1}{r|}{Avg. relative errors}  					& $1.1 \times 10^{-15}$ & \multicolumn{1}{c|}{$1.8\times 10^{-15} $}\\
\multicolumn{1}{r|}{$90^{th}$ percentile of relative errors}	                 & $9.2 \times 10^{-16}$ & \multicolumn{1}{c|}{$9.9 \times 10^{-16} $}\\
\multicolumn{1}{r|}{Avg. feasibility errors}  					& $  9.6 \times 10^{-13 }$ & \multicolumn{1}{c|}{$ 8.8 \times 10^{- 13} $}\\
\cline{2-3}
\end{tabular} }
\end{table} 

\section{Conclusions}
Theoretically, we have extended the idea of active-set prediction using controlled perturbations from {\lp} to {\qp}. 
Numerically, we have obtained satisfactory  preliminary results. 
Based on our observations, it seems that for the purpose of  optimal active-set prediction for {\ipms} for {\qp} problems, and the idea of using controlled perturbations is promising.

Note that our implementation of Algorithm~\ref{alg:perAlgQP} is preliminary. 
We have not employed techniques such as the predictor-corrector or multiple centralities~\cite{Gondzio25year}. Thus the algorithm may not be efficient enough and needs further refinement.

\paragraph{Acknowledgement}
I am grateful to Dr. Coralia Cartis for useful discussions.

\section*{References}
\bibliographystyle{elsarticle-num} 
\bibliography{bibliography}

\appendix

\section{Proof of Lemma~\ref{lem-eb_qp}}
\label{apd-proof_lemma_eb_qp}
\renewcommand{\theequation}{A.\arabic{equation}}
\setcounter{theorem}{0}
\renewcommand{\thetheorem}{A.\arabic{theorem}}
We follow the approach in~\cite[Appendix A]{CartisYan2014} but apply it to~\eqref{eqv:orgQP} problems.
Substituting $s = c + Hx -A^{\top}y$ and $y=y^{+}-y^{-}$, where $y^{+}=\max(y,0)$ and $y^{-}=-\min(y,0)$ into the first order optimality conditions~\eqref{eqv:kkt_perturbedQP} with $\lambda = 0$, we can verify that finding an optimal solution of~\eqref{eqv:orgQP} is equivalent to solving the following {\lcp} problem,
\begin{equation}
\label{eqv:lcp}
\quad Mz+q \geq 0, \quad z \geq 0, \quad z^{T}(Mz+q)=0,\quad
\end{equation}
where $Q$, $A$, $b$ and $c$ are~\eqref{eqv:orgQP} problem data, $\pntp \in \Real^{n} \times \Real^{m} \times \Real^{n}$ and $z$ is considered to be the vector of variables, and
where 
\begin{equation}
\label{eqv:LCP_Mzq_qp}
M = 
\begin{bmatrix}
H     & 	-A^{T}   & A^{T} 	\\
A     & 	0	    & 	     0 \\
-A   &	0 	    &       0  	
\end{bmatrix}
,\quad
q = 
\begin{bmatrix}
c \\
-b \\
b
\end{bmatrix}
\quad \mbox{and}\quad
z = 
\begin{bmatrix}
x \\
y^{+} \\
y^{-}
\end{bmatrix}.
\end{equation}

\begin{lemma}
\label{lem:M_psd}
The matrix $M$, defined in~\eqref{eqv:LCP_Mzq_qp}, is positive semidefinite, and so~\eqref{eqv:lcp} is a monotone {\lcp}.
\end{lemma}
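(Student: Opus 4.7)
The plan is to verify positive semidefiniteness directly by expanding the quadratic form $z^T M z$ for an arbitrary vector $z = (x, u, v)^T$ with $x \in \mathbb{R}^n$ and $u, v \in \mathbb{R}^m$ (playing the roles of $y^{+}$ and $y^{-}$, respectively, although the lemma statement itself makes no sign assumptions on $u,v$, which is fine for PSD).

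Using the block structure of $M$ from~\eqref{eqv:LCP_Mzq_qp}, I would compute
\[
z^T M z \;=\; x^T H x \;-\; x^T A^T u \;+\; x^T A^T v \;+\; u^T A x \;-\; v^T A x.
\]
Now I would observe the key cancellation: since $x^T A^T u = (Ax)^T u = u^T (Ax)$, the first cross term cancels with the third, and analogously the second cross term cancels with the fourth. Hence
\[
z^T M z \;=\; x^T H x \;\geq\; 0,
\]
where the inequality uses the standing hypothesis that $H$ is symmetric positive semidefinite (stated right after~\eqref{eqv:orgQP}). This shows $M \succeq 0$.

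To conclude that~\eqref{eqv:lcp} is monotone, I would invoke the standard definition: an LCP is monotone precisely when its defining matrix is positive semidefinite (see, e.g., the standard references on the LCP). The result then follows immediately.

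There is essentially no obstacle here beyond bookkeeping; the skew-symmetric block pattern built from $\pm A$ and $\pm A^T$ forces all cross terms to cancel, and the only surviving contribution is the quadratic form in $H$, which is PSD by assumption. The proof is a one-line computation once the block structure is written out, and there is no need to restrict to $z \geq 0$ or to impose $y = u - v$.
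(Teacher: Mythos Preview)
Your argument is correct and mirrors the paper's own proof: both expand $z^{T}Mz$ blockwise, observe that the $\pm A$, $\pm A^{T}$ cross terms cancel since each scalar equals its transpose, and conclude $z^{T}Mz = x^{T}Hx \geq 0$ by positive semidefiniteness of $H$. The only difference is notation (the paper uses $v=(v_1,v_2,v_3)$), so there is nothing further to add.
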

\begin{proof}
$\forall v \neq 0, v=(v_{1},v_{2},v_{3})$, where $v_{1} \in \Real^{n}, v_{2} \in \Real^{m}$ and $v_{3} \in \Real^{m}$.
$v^{T} M v= v^{T}_{1}Hv_{1} + v^{T}_{2}Av_{1} - v^{T}_{3}Av_{1} - v_{1}^{T}A^{T}v_{2} + v^{T}_{1}A^{T}v_{3}$. Since $v^{T}_{2}Av_{1} = (v^{T}_{2}Av_{1})^{T}=v^{T}_{1}A^{T}v_{2}$ and $v^{T}_{3}Av_{1}=(v^{T}_{3}Av_{1})^{T}=v^{T}_{1}A^{T}v_{3}$, we have $v^{T} M v = v^{T}_{1}Hv_{1} \geq 0$ as $H$ is positive semi-definite. Thus
$M$ is positive semi-definite.
\end{proof}

A global error bound for a monotone {\lcp}~\cite{Mangasarian} has already been present in~\cite[Appendix A]{CartisYan2014}. We restate it here for clarity.
 
\begin{lemma}[{Mangasarian and Ren~\cite[Corollary 2.2]{Mangasarian}}]
\label{lem-eb}
Let $z$ be any point away from the solution set of a monotone {\lcp}(M,q)~\eqref{eqv:lcp} and $z^{*}$ be the closest solution of~\eqref{eqv:lcp} 
to $z$ under the norm $\|\cdot\|$.
Then $r(z)+w(z)$ is a global error bound for~\eqref{eqv:lcp}, namely,
\[ 
	\| z-z^{*}\| \leq \tau (r(z)+w(z)), 
\]
where $\tau$ is some problem-dependent constant, independent of $z$ and $z^{*}$, and
\begin{equation}
\label{eqv:r_w}
\quad r(z)=\| z-(z-Mz-q)_{+}\|
\quad
\text{and}
\quad 
w(z)=\| \left( -Mz-q,-z,z^{T}(Mz+q) \right)_{+}\|. \quad
\end{equation}
\end{lemma}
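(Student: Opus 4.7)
The plan is to prove this error bound via a polyhedral reformulation of the solution set combined with Hoffman's lemma, which is the standard route for global error bounds in monotone $\operatorname{LCP}$. First I would note that $z$ solves $\operatorname{LCP}(M,q)$ exactly when $r(z)=0$, since $r(z) = \|z - (z-Mz-q)_+\|$ is well known to coincide with the natural residual $\|\min(z, Mz+q)\|$, whose vanishing is equivalent to nonnegativity, feasibility, and complementarity. In the same way, $w(z)$ vanishes precisely when $z \geq 0$, $Mz+q \geq 0$, and $z^T(Mz+q) \leq 0$. So both $r$ and $w$ are zero precisely on the solution set $S$, and it remains to show they also control the distance quantitatively.

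Second, I would show that monotonicity of $M$ forces $S$ to be a polyhedron. Fixing a reference $z^{*} \in S$ and using $v^T M v \geq 0$ together with $(z')^T(Mz'+q) = 0$ for any other $z' \in S$, one obtains $(z'-z^{*})^T M(z'-z^{*}) = 0$, and since $M$ is positive semidefinite this already implies $(M+M^{T})(z'-z^{*}) = 0$. This linearises the complementarity constraint on $S$ and allows one to write $S = \{z \geq 0 : Mz+q \geq 0, \, \tilde A z = \tilde b\}$, where $\tilde A, \tilde b$ depend only on $M$, $q$ (any admissible choice of $z^{*}$ gives the same set).

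Third, I would apply Hoffman's lemma to this polyhedral description to obtain a constant $\tau_{0} = \tau_{0}(M,q)$ with $\operatorname{dist}(z, S) \leq \tau_{0}\, V(z)$, where $V(z)$ collects the violations of the defining constraints. The final step is then to bound $V(z)$ componentwise by $r(z)+w(z)$: the two infeasibility blocks $(-z)_+$ and $(-Mz-q)_+$ and the excess complementarity $\bigl(z^T(Mz+q)\bigr)_+$ appear verbatim inside $w(z)$; the residuals of the linearised equalities $\tilde A z = \tilde b$ are controlled by $r(z)$ via the min-formulation $r(z) = \|\min(z, Mz+q)\|$ together with a case split on which of $z_i$ or $(Mz+q)_i$ is smaller at each coordinate. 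Combining these gives $\|z-z^{*}\| \leq \tau (r(z)+w(z))$ with $\tau$ equal to $\tau_{0}$ times the absolute constant from this bookkeeping, and independent of $z$ and $z^{*}$.

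The main obstacle is the second step: reducing the quadratic complementarity condition $z^T(Mz+q)=0$ to a finite system of linear equations and inequalities, uniformly over $S$ rather than only locally near one solution, is where monotonicity of $M$ is really needed, and the argument hinges on the fact that $v^T M v = 0$ already forces $(M+M^{T})v = 0$ for positive semidefinite $M$. Without this, Hoffman's lemma could not be invoked globally and one would at best obtain a local (and possibly non-Lipschitz) bound. Once the polyhedral description is in place, the remaining steps are essentially bookkeeping.
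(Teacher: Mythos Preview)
The paper does not supply a proof of this lemma: it is quoted directly from Mangasarian and Ren (their Corollary~2.2) and used as a black box, so there is no in-paper argument to compare against.

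Your outline is nonetheless the right strategy and mirrors the route underlying Mangasarian--Ren: exploit monotonicity to show the solution set $S$ is a polyhedron, then invoke a Hoffman-type bound. Two remarks. First, in step~2 the relation $(M+M^{T})(z'-z^{*})=0$ by itself does not pin down $S$; you also need that $q^{T}z$ is constant on $S$ (which follows from the same cross-complementarity computation), so that the polyhedral description reads $S=\{z\ge 0:\,Mz+q\ge 0,\;(M+M^{T})z=(M+M^{T})z^{*},\;q^{T}z=q^{T}z^{*}\}$. Second, and more substantively, step~3 is not ``essentially bookkeeping'': the violations of these linear \emph{equalities} are not controlled by $r(z)=\|\min(z,Mz+q)\|$ alone via a componentwise case split. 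For a coordinate with $0\le (Mz+q)_{i}<z_{i}$, the contribution to $r(z)$ is only $(Mz+q)_{i}$ while $z_{i}$ itself can be arbitrarily large, and it is $z_{i}$ that enters the equality residuals. Mangasarian and Ren avoid this by viewing $S$ as the optimal set of the convex quadratic program $\min\,z^{T}(Mz+q)$ over the feasible polyhedron and applying a QP error bound, so that the scalar gap $\bigl(z^{T}(Mz+q)\bigr)_{+}$ inside $w(z)$ does the work your case split cannot. Your plan becomes a complete proof if you either route through that QP error bound or explicitly bring the $\bigl(z^{T}(Mz+q)\bigr)_{+}$ term into the control of the equality residuals; as written, the difficulty you flag in step~2 is real, but step~3 is where the argument actually needs the most care.
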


In~\cite[Theorem A.5]{CartisYan2014}, we present an error bound for {\lp}. It is straightforward to extend this result to {\qp} problems. We state the following lemma without giving a proof.
\begin{lemma}[Error bound for {\qp}]
\label{lem_errorbounds_qp}
Let $\pntp \in \Real^{n}\times\Real^{m}\times\Real^{n}$ where $s = c - A^{T}y + Hx  $. Then there exist a solution $(x^{*},y^{*},s^{*})$ of~\eqref{eqv:orgQP} and problem-dependent constants $\tau_{p}$ and $\tau_{d}$, independent of $\pntp$ and $\optSol$, such that
\begin{equation*}
\|x-x^{*}\| \leq \tau_{p}\left(r(x,y) + w(x,y) \right)
\quad
\text{and}
\quad
\|s-s^{*}\| \leq \tau_{d} \left(r(x,y) + w(x,y) \right),
\end{equation*}
where
\begin{equation}
\label{r_lp}
    r(x,y)  = \left\| \left(\cmin{x}{s},\,\, \cmin{y^{+}}{Ax-b},\,\, \cmin{y^{-}}{-Ax + b}\right) \right\|,
\end{equation}
and
\begin{equation}
\label{w_lp}
w(x,y) = \| (-s,\,\,b-Ax,\,\,Ax-b,\,\,-x,\,\,c^{T}x - b^{T}y + x^{T}Hx)_{+}\|,
\end{equation}
and where $\cmin{x}{s} = \left(\, \min(x_{i},s_{i}) \,\right)_{ i = 1,\ldots, n }$, $y^{+} = \cmax{y}{0}$ and $y^{-} = -\cmin{y}{0}$.
\end{lemma}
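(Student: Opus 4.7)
The plan is to reduce the statement to the monotone LCP error bound (Lemma~\ref{lem-eb}) via the reformulation already set up in the appendix. More precisely, I would first note that the substitution $s = c + Hx - A^{T}y$ and $y = y^{+} - y^{-}$ (with $y^{\pm} = (\pm y)_{+}$) casts the KKT system of~\eqref{eqv:orgQP} as the monotone LCP~\eqref{eqv:lcp} with data $(M,q)$ from~\eqref{eqv:LCP_Mzq_qp}; Lemma~\ref{lem:M_psd} provides the monotonicity, so Lemma~\ref{lem-eb} applies and yields a constant $\tau>0$ with $\|z-z^{*}\| \leq \tau(r(z)+w(z))$ for the closest LCP solution $z^{*}=(x^{*},(y^{*})^{+},(y^{*})^{-})$.

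Next I would show that the LCP error quantities $r(z)$ and $w(z)$ in~\eqref{eqv:r_w} collapse to the expressions~\eqref{r_lp} and~\eqref{w_lp} stated in the lemma. For $r(z)$, a direct computation using $Mz+q = (s, Ax-b, -Ax+b)$ combined with the componentwise identity $t-(t-u)_{+} = \min(t,u)$ gives exactly the three min-blocks that appear in $r(x,y)$. For $w(z)$, the same expression for $Mz+q$ produces the first three blocks $(-s,\,b-Ax,\,Ax-b)_{+}$; the $-z$ block drops its $-y^{\pm}$ entries because $y^{\pm}\geq 0$, leaving only $(-x)_{+}$; and the scalar term simplifies as
\begin{equation*}
z^{T}(Mz+q) \;=\; x^{T}s \;+\; (y^{+}-y^{-})^{T}(Ax-b) \;=\; x^{T}s + y^{T}(Ax-b) \;=\; c^{T}x - b^{T}y + x^{T}Hx,
\end{equation*}
where the last equality uses $s = c + Hx - A^{T}y$. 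This is precisely the final entry in~\eqref{w_lp}.

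Having matched $r$ and $w$, the primal bound $\|x-x^{*}\| \leq \tau(r(x,y)+w(x,y))$ follows immediately from $\|x-x^{*}\| \leq \|z-z^{*}\|$, so I can take $\tau_{p}=\tau$. For the dual bound, I would use $y = y^{+}-y^{-}$ to get $\|y-y^{*}\|\leq 2\|z-z^{*}\|$, and then exploit the dual-feasibility substitution: since both $s$ and $s^{*}$ satisfy $s = c + Hx - A^{T}y$, subtraction gives $s - s^{*} = H(x-x^{*}) - A^{T}(y-y^{*})$, whence $\|s-s^{*}\| \leq (\|H\| + 2\|A\|)\,\tau\,(r(x,y)+w(x,y))$, so one can set $\tau_{d} = (\|H\|+2\|A\|)\tau$.

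The only delicate step is the bookkeeping in the second paragraph: verifying that the $y^{\pm}\geq 0$ components really do zero out in both $r(z)$ and $w(z)$ without losing information, and carefully checking that the scalar $z^{T}(Mz+q)$ collapses to $c^{T}x - b^{T}y + x^{T}Hx$ under the assumed substitution $s = c - A^{T}y + Hx$ (rather than requiring primal feasibility). Everything else is a direct invocation of Lemmas~\ref{lem:M_psd} and~\ref{lem-eb} together with elementary norm inequalities.
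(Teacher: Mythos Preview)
Your proposal is correct and is precisely the argument the paper has in mind: the paper does not give an explicit proof here but declares the result ``straightforward to extend'' from the {\lp} analogue in \cite[Theorem~A.5]{CartisYan2014}, which proceeds exactly via Lemmas~\ref{lem:M_psd} and~\ref{lem-eb} and the same block-by-block identification of $r(z)$ and $w(z)$ that you outline. One cosmetic point: the closest LCP solution $z^{*}=(x^{*},u^{*},v^{*})$ need not literally have $u^{*}=(y^{*})^{+}$ and $v^{*}=(y^{*})^{-}$ (the two blocks could share support), but your bound $\|y-y^{*}\|\leq 2\|z-z^{*}\|$ still holds with $y^{*}:=u^{*}-v^{*}$, so the argument goes through unchanged.
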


\paragraph{Proof of Lemma~\ref{lem-eb_qp}} Considering $Ax = b$ and $A^{T}y+s-Hx = c$, this result follows from Lemma~\ref{lem_errorbounds_qp}. 

\section{An active-set prediction procedure}
\label{appnd:actvprocedure}
\setcounter{algorithm}{0}
\renewcommand{\thealgorithm}{B.\arabic{algorithm}}
In our numerical test, we apply the following strategy to predict the active constraints.
We partition the index set $\{ 1,2,\ldots,n  \}$ into three sets, $\Actv^{k}$ as the predicted active set, $\Iactv^{k}$ as the predicted inactive set and $\Nactv^{k} = \{ 1,2,\ldots,n  \} \backslash \left( {\Actv}^{k} \cup {\Iactv}^{k}\right)$ which includes all undetermined indices.
During the running of the algorithm, we move indices between these sets according to the threshold tests 
$x^{k}_{i} < C$ and $s^{k}_{i} > C,$
where $C$ is a user-defined threshold; $C = 10^{-5}$ in our tests.
Initialise $\Actv^{0} = \Iactv^{0} = \emptyset$ and $\Nactv^{0} = \{  1,2,\dots,n \}$.
An index is moved from $\Nactv^{k}$ to $\Actv^{k}$ if the threshold test is satisfied for two consecutive  iterations, otherwise from $\Nactv^{k}$ to $\Iactv^{k}$.
We move an index from $\Actv^{k}$ to $\Nactv^{k}$ if the threshold test is not satisfied at the current iteration.
An index is moved from $\Iactv^{k}$ to $\Nactv^{k}$ if the threshold test  is satisfied at the current iteration. We summarise the above as Procedure~\ref{alg:actvPrediction}.
\begin{algorithm}[htb]         
\floatname{algorithm}{Procedure}            
\caption{An Active-set Prediction Procedure}          
\label{alg:actvPrediction}                           
{\footnotesize
\begin{algorithmic}
	\STATE{\textbf{Initialise}: $A^{0} = \Iactv^{0} = \emptyset$ and $\Nactv^{0} = \{  1,2,\dots,n \}$.}
	\STATE{\textbf{At $k^{th}$ iteration, $k > 1$},}
	\FOR{$i = 1,\dots,n$}
		\IF{$i \in \Nactv^{k}$}
			\IF{the threshold test is satisfied for iterations $k-1$ and $k$}
				\STATE{$\Actv^{k} = \Actv^{k} \cup \{i\}$ and $\Nactv^{k} = \Nactv^{k}\backslash\{i\}$;}
			\ELSE
				\STATE{$\Iactv^{k} = \Iactv^{k} \cup \{i\}$ and $\Nactv^{k} = \Nactv^{k}\backslash\{i\}$.}
			\ENDIF
			\IF{$i \in \Actv^{k}$ and the threshold test is not satisfied}
				\STATE{$\Actv^{k} = \Actv^{k} \backslash \{i\}$ and $\Nactv^{k} = \Nactv^{k}\cup\{i\}$;}
			\ENDIF
			\IF{$i \in \Iactv^{k}$ and the threshold test is satisfied}
				\STATE{$\Iactv^{k} = \Iactv^{k} \backslash \{i\}$ and $\Nactv^{k} = \Nactv^{k}\cup\{i\}$.}
			\ENDIF
		\ENDIF
	\ENDFOR
\end{algorithmic} }
\end{algorithm}

\begin{landscape}
\section{Numerical results for solving sub-problems}
\label{apd-netlib_cuter_sub}
In Table~\ref{tab-sumSolveSub}, from the left to the right, we present the name the problem, the number of equality constraints and variables, 	the value of duality gap when terminate Algorithms~\ref{alg:perAlgQP} and~\ref{alg:unperAlgQP}, the number of active-set iterations for solving the subproblems generated from Algorithms~\ref{alg:perAlgQP} and~\ref{alg:unperAlgQP}, the primal feasibility errors for the optimal solutions of the subproblems from Algorithms~\ref{alg:perAlgQP} and~\ref{alg:unperAlgQP},  and the objective errors between the subproblem and the original problem. 
\begin{footnotesize}
\begin{center}
\begin{longtable}{| l | l | l | l | l | l | l | l | l | l | l | l |}
\caption[Solving sub-problem test on a selection of Netlib and Maros and Meszaros' convex {\qp} problems]{%
\normalsize Solving sub-problem test on a selection of Netlib and Maros and Meszaros' convex {\qp} problems.} 
\label{tab-sumSolveSub} \\

\hline \multicolumn{1}{|c|}{\textbf{Probs}} %
    & \multicolumn{1}{c|}{\textbf{m}} %
    & \multicolumn{1}{c|}{\textbf{n}} %
    & \multicolumn{1}{c|}{\textbf{$\mu^{K}_{\lambda}$}} %
    & \multicolumn{1}{c|}{\textbf{$\mu^{K}$}} %
    & \multicolumn{1}{c|}{\textbf{IPM Itr}} %
    & \multicolumn{1}{c|}{\textbf{actvItr Per}} %
    & \multicolumn{1}{c|}{\textbf{actvItr Unp}} 
    & \multicolumn{1}{c|}{\textbf{feaErr Per}} %
    & \multicolumn{1}{c|}{\textbf{feaErr Unp}} %
    & \multicolumn{1}{c|}{\textbf{relObjErr Per}} %
    & \multicolumn{1}{c|}{\textbf{relObjErr Unp}} \\ %
\hline 
\endfirsthead

\multicolumn{12}{c}%
{{\bfseries \tablename\ \thetable{} -- continued from previous page}} \\
\hline \multicolumn{1}{|c|}{\textbf{Probs}} %
    & \multicolumn{1}{c|}{\textbf{m}} %
    & \multicolumn{1}{c|}{\textbf{n}} %
    & \multicolumn{1}{c|}{\textbf{$\mu^{K}_{\lambda}$}} %
    & \multicolumn{1}{c|}{\textbf{$\mu^{K}$}} %
    & \multicolumn{1}{c|}{\textbf{IPM Itr}} %
    & \multicolumn{1}{c|}{\textbf{actvItr Per}} %
    & \multicolumn{1}{c|}{\textbf{actvItr Unp}} 
    & \multicolumn{1}{c|}{\textbf{feaErr Per}} %
    & \multicolumn{1}{c|}{\textbf{feaErr Unp}} %
    & \multicolumn{1}{c|}{\textbf{relObjErr Per}} %
    & \multicolumn{1}{c|}{\textbf{relObjErr Unp}} \\ %
\hline 
\endhead

\hline \multicolumn{12}{|r|}{{Continued on next page}} \\ \hline
\endfoot

\hline \hline
\endlastfoot

QP\_ADLITTLE &   55 &  137 &      7.9e-04 &      9.6e-04 &           13 &            3 &           22 &      1.5e-12 &      1.0e-12 &      0.0e+00 &      1.6e-16 \\
  QP\_AFIRO &   27 &   51 &      1.9e-04 &      2.7e-04 &           13 &            1 &            5 &      2.9e-13 &      3.3e-13 &      7.2e-16 &      4.3e-16 \\
  QP\_BLEND &   74 &  114 &      2.9e-04 &      3.2e-04 &           14 &            7 &           38 &      5.4e-13 &      4.8e-13 &      4.8e-16 &      9.9e-15 \\
  QP\_SC50A &   49 &   77 &      9.2e-05 &      1.5e-04 &           10 &            1 &            1 &      2.6e-13 &      2.6e-13 &      7.5e-16 &      7.5e-16 \\
  QP\_SC50B &   48 &   76 &      5.3e-04 &      7.9e-04 &            8 &            2 &            3 &      3.2e-13 &      3.9e-13 &      1.2e-16 &      5.9e-16 \\
 QP\_SCAGR7 &  129 &  185 &      8.6e-04 &      1.3e-03 &           15 &            1 &           10 &      1.0e-11 &      9.5e-12 &      2.4e-16 &      2.4e-16 \\
QP\_SHARE2B &   96 &  162 &      1.2e-04 &      1.4e-04 &           20 &            4 &           12 &      6.0e-12 &      4.7e-12 &      1.6e-14 &      2.3e-14 \\
  CVXQP1\_S &  150 &  200 &      4.5e-04 &      7.8e-04 &            8 &            1 &           16 &      6.0e-14 &      6.1e-14 &      1.6e-16 &      1.6e-16 \\
  CVXQP2\_S &  125 &  200 &      6.5e-04 &      1.1e-03 &            8 &            1 &           48 &      3.5e-14 &      4.0e-14 &      9.2e-16 &      4.6e-16 \\
  CVXQP3\_S &  175 &  200 &      5.4e-04 &      6.6e-04 &            9 &            2 &            4 &      6.3e-14 &      5.4e-14 &      4.7e-16 &      4.7e-16 \\
     DUAL1 &   86 &  170 &      5.2e-04 &      6.1e-04 &            2 &           29 &           29 &      6.5e-15 &      6.5e-15 &      0.0e+00 &      0.0e+00 \\
     DUAL2 &   97 &  192 &      5.1e-04 &      6.4e-04 &            2 &            5 &            5 &      6.4e-15 &      6.4e-15 &      0.0e+00 &      0.0e+00 \\
     DUAL3 &  112 &  222 &      5.9e-04 &      6.1e-04 &            3 &           15 &           15 &      1.3e-14 &      1.3e-14 &      0.0e+00 &      0.0e+00 \\
     DUAL4 &   76 &  150 &      3.0e-04 &      4.3e-04 &            4 &           14 &           14 &      1.3e-14 &      1.3e-14 &      0.0e+00 &      0.0e+00 \\
     HS118 &   44 &   59 &      1.8e-04 &      2.8e-04 &            8 &            0 &           15 &      2.3e-14 &      1.5e-13 &      3.2e-16 &      0.0e+00 \\
      HS21 &    3 &    5 &      3.4e-04 &      6.6e-04 &           10 &            2 &            2 &      5.2e-14 &      5.2e-14 &      0.0e+00 &      0.0e+00 \\
      HS51 &    3 &   10 &      9.2e-04 &      7.3e-04 &            3 &           20 &           20 &      3.1e-15 &      3.1e-15 &      0.0e+00 &      0.0e+00 \\
      HS53 &    8 &   10 &      9.9e-04 &      1.9e-03 &            6 &            1 &            1 &      2.2e-14 &      2.2e-14 &      0.0e+00 &      0.0e+00 \\
      HS76 &    3 &    7 &      7.9e-05 &      1.5e-04 &            6 &            1 &            3 &      8.9e-16 &      1.9e-15 &      1.6e-16 &      0.0e+00 \\
  ZECEVIC2 &    4 &    6 &      2.6e-04 &      4.0e-04 &            5 &            1 &            2 &      4.4e-15 &      3.9e-15 &      8.7e-16 &      0.0e+00 \\
\hline
  Average: &      &      &      4.6e-04 &      6.4e-04 &            8 &            6 &           13 &      9.6e-13 &      8.6e-13 &      1.1e-15 &      1.8e-15 \\
90th Pctl: &      &      &       	     &                  &               &              &              &      6.0e-12 &      4.7e-12 &      9.2e-16 &      9.9e-15
\end{longtable}
\end{center}
\end{footnotesize}
\end{landscape}

\end{document}